\documentclass[10pt]{article}
\usepackage{amssymb}
\usepackage{amsthm}
\usepackage{amsmath}
\usepackage{mathrsfs}
\usepackage{verbatim}


\begin{document}

\title{Multi-marginal optimal transport and multi-agent matching problems: uniqueness and structure of solutions\footnote{The author is pleased to acknowledge the support of a University of Alberta start-up grant and a National Sciences and Engineering Research Council of Canada Discovery Grant.}}
\author{Brendan Pass\footnote{Department of Mathematical and Statistical Sciences, 632 CAB, University of Alberta, Edmonton, Alberta, Canada, T6G 2G1 pass@ualberta.ca.}}
\maketitle

\begin{abstract}
We prove uniqueness and Monge solution results for multi-marginal optimal transportation problems with a certain class of surplus functions; this class arises naturally in multi-agent matching problems in economics.  This result generalizes a seminal result of Gangbo and \'Swi\c{e}ch on multi-marginal problems.  Of particular interest, we show that this also yields a partial generalization of the Gangbo-\'Swi\c{e}ch result to manifolds; alternatively, we we can think of this as a partial extension of McCann's theorem for quadratic costs on manifolds to the multi-marginal setting.
   
We also show that the class of surplus functions considered here neither contains, nor is contained in, the class of surpluses studied in \cite{P1}, another generalization of Gangbo and \'Swi\c{e}ch's result.
\end{abstract}
\maketitle
\section{Introduction}
Two marginal optimal transportation is the general problem of coupling two distributions of mass together as efficiently as possible, relative to a given surplus function.  This is an exciting and very active area of research, with a wide variety of applications.  For a full literature review, see the book of Villani \cite{V2}.

Recently, optimal transport problems with several marginals have begun to attract more attention, due to emerging applications in economics \cite{CarEke} \cite{cmn}, mathematical finance \cite{ght}\cite{hl}\cite{bhlp}, condensed matter physics \cite{cfk} \cite{bdpgg}, barycenters and image processing \cite{ac} \cite{bdpr} and the characterization of $m$-cyclically monotone vector fields \cite{GG}.  This is the general problem of aligning \textit{several} mass distributions with maximal efficiency.  Stated precisely, given Borel probability measures $\mu_i$ on bounded, open sets $M_i \subseteq \mathbb{R}^n$, for $i=1,2...m$ and a surplus function $b:M_1 \times M_2 \times...\times M_m \rightarrow \mathbb{R}$, the problem is to maximize

\begin{equation} \tag{MK}\label{MK}
\int_{M_1 \times M_2 \times...\times M_m} b(x_1,x_2,...,x_m)d\gamma
\end{equation}
among measures $\gamma$ on $M_1 \times M_2 \times...\times M_m$ which project to the $\mu_i$.  Under reasonable conditions, a maximizer $\gamma$ exists.  We say that $\gamma$ is a \textit{Monge solution} if it is concentrated on the graph of a function over the first variable.

When $m=2$, this is the classical, two marginal Monge-Kantorovich problem.  In that setting, assuming the first marginal is absolutely continuous with respect to Lebesgue measure and the surplus function $b$ satisfies a fairly weak condition, called the twist condition, it is by now well known that the solution $\gamma$ induces a Monge solution and is unique \cite{lev}\cite{GM}\cite{caf}.

When $m>2$, however, the picture is much muddier.   In contrast to the two marginal case, the structure of solutions to these problems is not well understood, except in certain special cases.  Uniqueness and Monge solution results have been proven by Gangbo and \'Swi\c{e}ch \cite{GS} for the quadratic surplus, 
\begin{equation}\label{gss}
b(x_1,x_2,...,x_m)=-\sum_{i\neq j}^m|x_i-x_j|^2; 
\end{equation}
see also \cite{OR}\cite{KS}\cite{RU}\cite{RU2}.  This was generalized by Heinich  \cite{H} to surplus functions of the form 
\begin{equation}\label{hs}
b(x_1,x_2,...,x_m) =h(\sum_{i=1}^mx_i),
\end{equation}
where $h:\mathbb{R}^n \rightarrow \mathbb{R}$ is strictly convex.  When $n=1$, Carlier was able to further generalize this result to the class of strictly $2$-monotone surplus functions. In higher dimensions, a result of the present author  establishes uniqueness and Monge solutions under strong second order conditions on the surplus \cite{P1}, encompassing the results in \cite{GS} and \cite{H}.  In contrast to the two marginal case, these results depend strongly on the form of the surplus function; a series of counterexamples in \cite{CN} \cite{P} and \cite{P6} indicate that solutions may not be unique or induce Monge solutions, even when the surplus functions satisfy naive multi-marginal generalizations of the twist condition.  In fact, the examples in \cite{P} are supported on submanifolds $S \subseteq M_1 \times M_2 \times...\times M_m$ of dimension $n(m-1)$.

In this paper we study a multi-marginal optimal transportation problem for a particular class of surplus function.  Given some other open subset $Z\subseteq \mathbb{R}^n$ and functions $f_i:M_1 \times Z \rightarrow \mathbb{R}$, we will consider surplus functions of the form:

\begin{equation} \label{hp}
b(x_1,x_2,...x_m) = \sup_{z \in Z}\sum_{i=1}^mf_i(x_i,z)
\end{equation}

Surplus functions of this form arise naturally in multi-agent matching, or hedonic pricing, problems in mathematical economics.  These types of problems were originally introduced by Ekeland \cite{Eke} (when $m=2$) and Carlier and Ekeland \cite{CarEke} (when $m \geq 3$); the formulation as an optimal transport problem with a surplus of the form \eqref{hp} can be found in the paper of Chiappori, McCann and Nesheim \cite{cmn}.  Economically, each measure $\mu_i$ represents the distribution of a category of agents on the type space $M_i$ (parameterizing, for example, the agents' skill sets).  These distributions must be matched to make teams; each team consists of one agent from each category.  The matching is done via signing contracts $z \in Z$ and the functions $f_i(x_i,z)$ represents the preference of an agent of type $x_i$ for contract $z$.  The result of Chiappori, McCann and Nesheim tells us that finding an equilibrium in this market is equivalent to solving the optimal transport problem with surplus \eqref{hp}; the optimal measure $\gamma$ tells us which agents come together to form teams in equilibrium.
 
Our main goal in this paper is to resolve Monge solution and uniqueness questions for this class of surplus functions. Aside from its importance in economic applications, this class of surplus functions includes those studied by Gangbo and \'Swi\c{e}ch \eqref{gss} and Heinich \eqref{hs} (as we show in subsections 5.1 and 5.2), and so our main theorem can be seen as a generalization of their results.  Let us recall that the class of surplus functions treated in \cite{P1} also included \eqref{gss} and \eqref{hs}.  As we show in the subsections 5.3 and 5.4, our result here is neither strictly stronger \textit{nor} weaker than the result in \cite{P1}; that is, we exhibit explicit examples of surplus functions satisfying the hypotheses in \cite{P1} which are \textit{not} of the form \eqref{hp}, \textit{as well as} examples of the form \eqref{hp} which do \textit{not} satisfy the assumptions in \cite{P1}.  It is interesting to note that our assumptions on the functions $f_i$ (conditions \textbf{H1} -\textbf{H5} in the next section) look much more like typical optimal transportation type conditions than the strong, complicated looking conditions required in \cite{P1}.

Unlike the results in \cite{GS}\cite{H}\cite{C} and \cite{P1}, our argument here does not rely on a dual formulation of \eqref{MK}.  Instead, the proof of our main result (Theorem \ref{main}) exploits two main ingredients.  The first is a general, structural result on solutions to multi-marginal problems \cite{P}.  For surplus functions of the form \eqref{hp}, this result surprisingly implies that the support of the solution $n$-rectifiable and spacelike for a certain semi-Riemannian metric.  

The second ingredient is the interplay between \eqref{MK} and another variational problem, introduced by Carlier and Ekeland \cite{CarEke} (see \eqref{MAM} in the next section).  From an economic perspective, this second problem can be seen as another formulation of the multi-agent problem.  In this formulation, roughly speaking, one focuses on the couplings between each category of agents and the contracts they sign (rather than on  the coupling between the various categories of agents, as in the  formulation of Chiappori, McCann and Nesheim).  The problem is to find the probability measure $\nu$ on the space of contracts which maximizes the sum of the two marginal transportation surpluses with  each $\mu_i$; economically, this measure tells us the relative frequency of contracts that get executed in equilibrium and the optimal couplings between each $\mu_i$ and $\nu$ tell us which types in each category sign which contracts.

Roughly speaking, the rectifiability result from \cite{P} allow us to apply tools from geometric measure theory (namely, the co-area formula) to the correspondence between teams and contracts (stated precisely, this is a map $\overline{z}:spt(\gamma) \rightarrow Z$, pushing $\gamma$ forward to $\nu$).   We then use the spacelike condition to show that different teams $(x_1,x_2,...,x_m)$ in the support of  the optimizer always sign different contracts, at least locally, allowing us to deduce that $\nu$ is absolutely continuous with respect to Lebesgue measure.  From here, we can use standard, two marginal optimal transport techniques to establish that in equilibrium, under appropriate assumptions, almost all contracts $z \in Z$ are signed by at most one agent type $x_i:= F_i(z)\in M_i$ (which easily yields the uniqueness of $\gamma$), and that the map $F_1:M_1 \rightarrow Z$ is invertible almost everywhere.  We then have that, for $(x_1,x_2,...,x_m)$ in the support of the optimizer, $x_i = F_i(F_1^{-1}(x_1))$, implying that $\gamma$ induces a Monge solution.

As a consequence of our work here (in fact, as a direct ingredient in our argument) we prove a regularity result on the optimizer in the matching problem formulation of Carlier and Ekeland.  In addition to its economic implications, this corollary  may have independent interest.  In the special case when each $f_i(x_i,z) =-|x_i-z|^2$, the Carlier-Ekeland formulation is equivalent to finding the \textit{barycenter} of the measures $\mu_i$, a problem introduced and solved by Agueh and Carlier \cite{ac}.   Our regularity result generalizes a result by Agueh and Carlier, who use a correspondence between the multi-marginal problem with quadratic cost \eqref{gss} and the problem of finding a barycenter of several measures on $\mathbb{R}^n$; in that paper, they used this relationship to establish an $L^{\infty}$ estimate on the barycenter (assuming that at least one of the $\mu_i$ has an $L^{\infty}$ density).  At the same time, our approach can be seen as generalizing an argument of the present author in the two marginal case; in that work, the geometry of the optimal plan is used to establish regularity of $\nu$ \cite{P10}.    

It is also natural to consider the barycenter problem in the Wasserstein space over a Riemannian manifold.  Riemannian manifolds with non-positive sectional curvature fit naturally into the framework of this paper. We will show that  our result also implies a regularity result on the barycenter of measures on simply connected, non-positively curved Riemannian manifolds,  generalizing the regularity result of Agueh and Carlier on Euclidean space.  

Furthermore, in this setting, we show that our main result can be interpreted as a generalization of the theorem of Gangbo and \'Swi\c{e}ch to simply connected, negatively curved Riemannian manifolds, in the same spirit that McCann's polar factorization theorem on Riemannian manifolds generalizes Brenier's polar factorization theorem on Euclidean space.
 
In the next section, we will introduce our assumptions on the functions $f_i$, as well as recall Carlier and Ekeland's formulation of the optimal matching problem and key theorems from \cite{CarEke} and \cite{P} which we will need for the proof of our main result.  In the third section, we prove that the solution $\nu$ to Carlier and Ekeland's problem \eqref{MAM} is absolutely continuous with respect to Lebesgue measure.  In the fourth section, we prove our main result, while the fifth section is reserved for examples and applications.
\section{Preliminaries}
In this section, we present assumptions we will make on the functions $f_i$, and recall some key ingredients which will used in the proof.
\subsection{Definitions and assumptions on the $f_i$.}
Assuming $f_i \in C^2(M_i)$, we define $D^2_{x_iz}f_i(x_i,z)$ to be the $n \times n$ matrix of mixed, second order partial derivatives, 
\begin{equation*}
D^2_{x_iz}f_i(x_i,z)=(\frac{\partial^2c}{\partial x_i^{\alpha} z^{\beta}})_{\alpha\beta}. 
\end{equation*}
Other matrices of second order derivatives will be denoted analogously. We will say that $f_i$ is \textit{non-degenerate} if  $D^2_{x_iz}f_i(x_i,z)$ is invertible for all $(x_i,z) \in M_i \times Z$.

We will denote by $D_{x_i}f_i(x_i,z)$ (respectively $D_{z}f_i(x_i,z)$) the differential of $f_i$ with respect to $x_i$ (respectively $z$). We say $f_i$ is $(x_i,z)$-twisted (respectively $(z,x_i)$-twisted) if for all $x_i \in M_i$ (respectively $z \in Z$), the mapping 
\begin{equation*}
z \mapsto D_{x_i}f_i(x_i,z)
\end{equation*}
\begin{equation*}
 (\text{respectively }x_i \mapsto D_{z}f_i(x_i,z)) 
\end{equation*}
is injective. 
We will \textit{always} assume the following hypotheses:
\begin{description} 
\item[H1]	For all $i$, $f_i$ is $C^2$ and non-degenerate.
\item[H2] For each $(x_1,x_2,...,x_m)$ the supremum in \eqref{hp} is attained by a \textit{unique} $z=\overline{z}(x_1,x_2,...,x_m) \in Z$. 
\item[H3] $B(x_1,x_2,...x_m):=\sum_{i=1}^m D^2_{zz}f_i(x_i,\overline{z}(x_1,x_2,...,x_m))$ is non-singular.
\item[H4] $f_1$ is $(x_1, z)$-twisted.
\end{description}

Note that condition \textbf{H2} defines a mapping $\overline{z}:M_1 \times M_2...\times M_m \rightarrow Z$, where $\overline{z}(x_1,x_2,...,x_m)$ is the unique maximizer in \eqref{hp}.

In addition, we will \textit{sometimes} assume 
\begin{description}
\item[H5] For all $i$, $f_i$ is $(z,x_i)$-twisted.
\end{description}

It was shown in \cite{P}, Proposition 3.3.1, that conditions \textbf{H1-H3} above imply that the surplus $b$ is  $C^2$ and the following formulae hold:

\begin{eqnarray}
D_{x_i}b(x_1,x_2,...,x_m) = D_{x_i}f_i(x_i,\overline{z}(x_1,x_2,....,x_m)) \label{b_x}\\
D_{x_i}\overline{z}(x_1,x_2,...,x_m)=-B^{-1}\cdot(x_1, x_2,...,x_m)D^2_{zx_i}f_i(x_i,\overline{z}(x_1,x_2,...,x_m)) \label{z_x}\\
 D^2_{x_ix_j}b(x_1,x_2,...,x_m)=\label{b_xij}\\
=-D^2_{x_iz} f_i(x_i,\overline{z}(x_1,x_2,....,x_m))\cdot B^{-1}(x_1, x_2,...,x_m)\cdot D^2_{zx_j}f_j(x_j,\overline{z}(x_1,x_2,....,x_m)), \text{ for } i \neq j.\nonumber
\end{eqnarray}

\subsection{Carlier and Ekeland's formulation of the matching problem}
Carlier and Ekeland developed an alternative formulation of the optimal matching problem which will be crucial in our argument \cite{CarEke}; we briefly review it here.  Given a Borel probability measure $\nu$ on $Z$, define 
\begin{equation} \label{trandist}
T_{f_i}(\nu, \mu_i) := \sup \int_{M_1 \times Z} f_i(x_i,z) d\pi.
\end{equation}
where the supremum is over all measures $\pi$ on $M_1 \times Z$ whose marginals are $\nu$ and $\mu_i$.  The multi-agent matching problem studied by Carlier and Ekeland is to find the $\nu$ which maximizes

\begin{equation}\tag{MAM}\label{MAM}
\nu \rightarrow \sum_{i=1}^m T_{f_i}(\nu, \mu_i) 
\end{equation}
We now record two results of Carlier and Ekeland which we will use in a crucial way.  First is the existence and uniqueness of a maximizer $\nu$ (Proposition 4 in \cite{CarEke}): 

\newtheorem{cesol}{Theorem}[subsection]
\begin{cesol}\label{cesol}
Assume conditions \textbf{H1} and \textbf{H4} and that $\mu_1$ is absolutely continuous with respect to Lebesgue measure.  Then there exists a unique maximizer $\nu$ for \eqref{MAM}.
\end{cesol}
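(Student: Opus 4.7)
The plan is twofold: existence by the direct method of the calculus of variations, uniqueness by a strict concavity argument exploiting \textbf{H4} and the absolute continuity of $\mu_1$. For existence, I would first check that each functional $\nu \mapsto T_{f_i}(\nu, \mu_i)$ is upper semi-continuous with respect to weak-$*$ convergence of probability measures. This follows from the continuity of $f_i$ (ensured by \textbf{H1}) together with the standard stability of optimal transport: any sequence of optimizing plans $\pi_k \in \Pi(\mu_i, \nu_k)$ for \eqref{trandist} is tight and admits a weakly convergent subsequence whose limit lies in $\Pi(\mu_i, \nu)$, and the integrals of $f_i$ pass to the limit. Summing, the entire functional is upper semi-continuous. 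Restricting to probability measures supported in a fixed compact subset of $\overline{Z}$ (justified by a tightness argument, or more directly by the equivalence with \eqref{MK} proved in \cite{cmn}, whereby one may take $\nu = \overline{z}_{\#}\gamma$ for an optimizer $\gamma$ of the multi-marginal problem), the candidate set is weakly-$*$ compact, and a maximizer exists.

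The heart of the proof is uniqueness. Each $\nu \mapsto T_{f_i}(\nu, \mu_i)$ is concave: for probability measures $\nu_0, \nu_1$ with respective optimizers $\pi_0 \in \Pi(\mu_i, \nu_0)$, $\pi_1 \in \Pi(\mu_i, \nu_1)$ in \eqref{trandist}, the convex combination $\pi_t := (1-t)\pi_0 + t\pi_1$ belongs to $\Pi(\mu_i, \nu_t)$ for $\nu_t := (1-t)\nu_0 + t\nu_1$, so
\begin{equation*}
T_{f_i}(\nu_t, \mu_i) \;\geq\; \int f_i\, d\pi_t \;=\; (1-t)\, T_{f_i}(\nu_0, \mu_i) + t\, T_{f_i}(\nu_1, \mu_i).
\end{equation*}
I would then upgrade this to \emph{strict} concavity for $i=1$. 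By \textbf{H4} and the absolute continuity of $\mu_1$, the classical two-marginal theorem (Levin, Gangbo--McCann) supplies, for each $\nu$, a unique optimizer $\pi_\nu$ in $T_{f_1}(\nu, \mu_1)$, concentrated on the graph of a Borel map $G_\nu : M_1 \to Z$ satisfying $(G_\nu)_{\#}\mu_1 = \nu$. If equality held above for $i=1$ at $t = 1/2$ with $\nu_0 \neq \nu_1$, then $\pi_{1/2}$ would be optimal for $T_{f_1}(\nu_{1/2}, \mu_1)$, hence concentrated on the graph of $G_{\nu_{1/2}}$. But $\pi_{1/2} = \tfrac{1}{2}(\mathrm{id}, G_{\nu_0})_{\#}\mu_1 + \tfrac{1}{2}(\mathrm{id}, G_{\nu_1})_{\#}\mu_1$ is graph-concentrated only if $G_{\nu_0} = G_{\nu_1}$ $\mu_1$-almost everywhere, forcing $\nu_0 = (G_{\nu_0})_{\#}\mu_1 = (G_{\nu_1})_{\#}\mu_1 = \nu_1$, a contradiction.

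Consequently $\sum_{i=1}^m T_{f_i}(\cdot, \mu_i)$ is the sum of one strictly concave and several concave functionals, hence strictly concave on the convex set of probability measures on $Z$, and uniqueness of the maximizer follows at once. The main obstacle is the strict concavity step for $i=1$: one must carefully invoke two-marginal uniqueness under only the one-sided twist \textbf{H4} on $f_1$, without appealing to the symmetric conditions imposed elsewhere in the paper. Once that step is isolated, the remainder is routine variational machinery and does not draw on the multi-marginal structure beyond what is used above.
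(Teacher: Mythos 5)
The paper does not prove this statement; it cites it as Proposition 4 in \cite{CarEke}, so there is no internal proof to compare against. Your uniqueness argument is the natural one and is essentially complete: linear interpolation of couplings gives concavity of each $T_{f_i}(\cdot,\mu_i)$, and the Levin/Gangbo--McCann two-marginal theory under the one-sided twist \textbf{H4} plus absolute continuity of $\mu_1$ upgrades this to strict concavity for the $i=1$ term, because equality at the midpoint would force the optimizer for $\nu_{1/2}$ to be simultaneously a nontrivial mixture of two graph measures and itself graph-concentrated, whence the two graphs coincide $\mu_1$-a.e.\ and $\nu_0=\nu_1$. You correctly isolate that only the twist on $f_1$ matters; the other summands contribute ordinary concavity, and a sum of one strictly concave and several concave functionals has at most one maximizer.

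The existence half leaves a gap. You propose to secure compactness of the candidate set of $\nu$'s either by tightness (unsupplied) or by the identification $\nu=\overline{z}_\#\gamma$ with an optimizer of \eqref{MK}. The second route is circular under the stated hypotheses: $\overline{z}$ is defined only via \textbf{H2}, and the equivalence between \eqref{MAM} and \eqref{MK} (the paper's Theorem \ref{equivalent}) explicitly assumes \textbf{H2}, whereas the present statement assumes only \textbf{H1} and \textbf{H4}. Since the paper allows $Z$ to be unbounded (subsection 5.2 takes $Z=\mathbb{R}^n$), neither tightness of maximizing sequences nor upper semicontinuity of $T_{f_i}(\cdot,\mu_i)$ over all probability measures on $Z$ is automatic; one needs either a coercivity/growth condition on the $f_i$ that forces optimizing sequences to concentrate, or an a priori localization to a compact subdomain of $Z$ via properties of the Kantorovich potentials. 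Relatedly, your appeal to the two-marginal Monge theorem for $T_{f_1}$ implicitly assumes $\nu$ compactly supported (or suitable integrability), which should be secured by the same coercivity argument. These are technical rather than conceptual gaps; once existence and the admissible localization are in place, your uniqueness argument is complete and correct.
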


The second result of Carlier and Ekeland which we will need encodes a natural relationship between the the problems \eqref{MAM} and \eqref{MK} (Proposition 3 in \cite{CarEke}):

\newtheorem{equivalent}[cesol]{Theorem}
\begin{equivalent}\label{equivalent}
Assume conditions \textbf{H1}, \textbf{H2} and \textbf{H4}.  Then, for the surplus $b$ defined by \eqref{hp}, the following hold.
\begin{enumerate}
\item If $\gamma$ solves \eqref{MK}, then $\overline{z}_{\#}\gamma = \nu$ maximizes \eqref{MAM}.
\item If $\nu$ maximizes \eqref{MAM}, then there exists a solution $\gamma$ to \eqref{MK} such that $\overline{z}_{\#}\gamma = \nu$.
\end{enumerate}
\end{equivalent}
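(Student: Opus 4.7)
The plan is to establish equality of the two optimal values, $\sup \eqref{MK} = \sup \eqref{MAM}$, via a double inequality using disintegration/gluing of measures, and then extract both parts of the theorem from the equality cases.

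For the first inequality, take any admissible $\gamma$ for \eqref{MK} (with marginals $\mu_i$). Set $\nu_\gamma := \overline{z}_\#\gamma$ and $\pi_i := (p_i, \overline{z})_\# \gamma$ on $M_i \times Z$, where $p_i$ is the projection onto the $i$th factor. By construction, each $\pi_i$ has marginals $\mu_i$ and $\nu_\gamma$. Using the definition \eqref{hp} of $b$ together with \textbf{H2}, we have $b(x_1,\ldots,x_m) = \sum_i f_i(x_i, \overline{z}(x_1,\ldots,x_m))$, so
\begin{equation*}
\int b\, d\gamma = \sum_{i=1}^m \int f_i(x_i, z)\, d\pi_i \leq \sum_{i=1}^m T_{f_i}(\nu_\gamma, \mu_i) \leq \sup \eqref{MAM}.
\end{equation*}
Taking the supremum over $\gamma$ yields $\sup \eqref{MK} \leq \sup \eqref{MAM}$.

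For the reverse inequality, start with any candidate $\nu$ for \eqref{MAM}, and let $\pi_i$ be an optimal coupling for $T_{f_i}(\nu, \mu_i)$. Disintegrate $\pi_i = \int_Z \pi_i^z\, d\nu(z)$ with $\pi_i^z$ a probability on $M_i$, and define the glued measure
\begin{equation*}
\sigma := \int_Z \pi_1^z \otimes \pi_2^z \otimes \cdots \otimes \pi_m^z \otimes \delta_z\, d\nu(z)
\end{equation*}
on $M_1 \times \cdots \times M_m \times Z$; its $Z$-marginal is $\nu$, its $(M_i, Z)$-marginal is $\pi_i$, and its $M_i$-marginal is $\mu_i$. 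Let $\gamma$ be its projection onto $M_1 \times \cdots \times M_m$, an admissible measure for \eqref{MK}. Since $b(x_1,\ldots,x_m) \geq \sum_i f_i(x_i, z)$ for every $z \in Z$,
\begin{equation*}
\sum_{i=1}^m T_{f_i}(\nu, \mu_i) = \int \sum_{i=1}^m f_i(x_i,z)\, d\sigma \leq \int b\, d\gamma \leq \sup \eqref{MK}.
\end{equation*}
Taking the supremum over $\nu$ completes the equality $\sup \eqref{MK} = \sup \eqref{MAM}$.

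Now the two conclusions follow by tracing the equality cases. For part (1), if $\gamma$ is optimal for \eqref{MK}, then all inequalities in the first display become equalities, forcing each $\pi_i = (p_i, \overline{z})_\#\gamma$ to be optimal for $T_{f_i}(\nu_\gamma, \mu_i)$ and $\nu_\gamma = \overline{z}_\#\gamma$ to be optimal for \eqref{MAM}. For part (2), if $\nu$ is optimal for \eqref{MAM}, then the $\gamma$ constructed by gluing satisfies $\int b\, d\gamma = \sup \eqref{MK}$ so is optimal. The only non-trivial point that remains is the identification $\overline{z}_\# \gamma = \nu$: from the equality $\int [b(x_1,\ldots,x_m) - \sum_i f_i(x_i,z)]\, d\sigma = 0$ and non-negativity of the integrand, the integrand vanishes $\sigma$-a.e.; by hypothesis \textbf{H2}, the maximizer is unique, so $z = \overline{z}(x_1,\ldots,x_m)$ for $\sigma$-a.e.\ $(x_1,\ldots,x_m,z)$. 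Hence $\sigma = (\mathrm{id}, \overline{z})_\# \gamma$, and its $Z$-marginal $\nu$ equals $\overline{z}_\# \gamma$. The one subtlety to watch for is this last step, since \textbf{H2} is what cements the connection between the gluing construction and the map $\overline{z}$; without it, $\gamma$ might fail to push forward to $\nu$ under $\overline{z}$.
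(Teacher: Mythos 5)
The paper does not prove this theorem itself; it cites it as Proposition~3 of Carlier and Ekeland \cite{CarEke}, so there is no in-paper proof to compare against. Your argument is a correct, self-contained proof of the statement, and it follows what is in fact the natural (and essentially the Carlier--Ekeland) route: establish $\sup\eqref{MK}=\sup\eqref{MAM}$ by a double inequality, using $\pi_i := (p_i,\overline{z})_\#\gamma$ for the easy direction and the gluing lemma (disintegrating each optimal $\pi_i$ over its common $Z$-marginal $\nu$ and forming $\sigma=\int_Z \pi_1^z\otimes\cdots\otimes\pi_m^z\otimes\delta_z\,d\nu(z)$) for the reverse, then read off both claims from the equality cases. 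Your treatment of the only delicate point --- that the constructed $\gamma$ in part (2) actually satisfies $\overline{z}_\#\gamma=\nu$ --- is handled correctly: the integrand $b(x)-\sum_i f_i(x_i,z)\ge 0$ must vanish $\sigma$-a.e., and \textbf{H2} upgrades this to $z=\overline{z}(x)$ $\sigma$-a.e., so $\sigma=(\mathrm{id},\overline{z})_\#\gamma$ and the $Z$-marginals agree. Two technicalities worth flagging, which explain why \textbf{H1} (and, in Carlier--Ekeland's broader development, \textbf{H4}) appear in the hypotheses even though your argument leans only on \textbf{H2}: one needs $\overline{z}$ to be Borel for the pushforwards to make sense (this follows from continuity of the $f_i$ and uniqueness of the maximizer, i.e.\ \textbf{H1}+\textbf{H2}, via a standard selection/continuity argument), and one needs optimal couplings $\pi_i$ for $T_{f_i}(\nu,\mu_i)$ to exist, which again uses the regularity and compactness supplied by \textbf{H1} and the boundedness of the $M_i$. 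Neither of these changes the structure of your proof; they are the reasons the stated hypotheses are slightly stronger than what your core argument visibly uses.
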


\subsection{Rectifiability of the optimizer}

We will also need the following result, which is a special case of Theorem 2.3 in \cite{P}; see also Proposition 3.3.1 in \cite{P} for more details.
\newtheorem{spacelike}{Theorem}[subsection]
\begin{spacelike}\label{spacelike}
Assume \textbf{H1-H3}. Let $\gamma$ be any minimizer for \eqref{MK} and assume conditions \textbf{H1-H3}.  Then the support of $\gamma$ is $n$-rectifiable.  In addition, it is spacelike for the symmetric bilinear form 
\begin{equation*}
\sum_{i\geq2}[D^2_{x_1x_i}b + D^2_{x_ix_1}b].
\end{equation*}
That is, for any tangent vector $v=(v_1,v_2,...,v_m)$ to spt$(\gamma)$, we have:

\begin{equation*}
\sum_{i\geq2}v_1^T\cdot D^2_{x_1x_i}b\cdot v_i \geq 0.
\end{equation*}
\end{spacelike}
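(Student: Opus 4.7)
The theorem is explicitly presented as a special case of Theorem 2.3 in \cite{P}, so my primary plan is to invoke that theorem once I check its hypotheses are met by the surplus $b$ of \eqref{hp}. The verification reduces to confirming that under \textbf{H1}--\textbf{H3}, $b$ is $C^2$ with the Hessian formulae \eqref{b_x}--\eqref{b_xij}, which is Proposition 3.3.1 of \cite{P}. Beyond this appeal, I would sketch the two underlying ingredients as follows.

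For the spacelike inequality, my plan is to exploit two-point monotonicity of $\text{spt}(\gamma)$: swapping the first coordinate between any $(x_1,\ldots,x_m)$ and $(y_1,\ldots,y_m)$ in the support yields
\begin{equation*}
b(x_1, x_2, \ldots, x_m) + b(y_1, y_2, \ldots, y_m) \geq b(y_1, x_2, \ldots, x_m) + b(x_1, y_2, \ldots, y_m).
\end{equation*}
Taking $y = x + \epsilon v$ for an approximate tangent vector $v = (v_1,\ldots,v_m)$ to $\text{spt}(\gamma)$ at $x$, the zeroth- and first-order terms in the Taylor expansion cancel automatically, while collecting the $O(\epsilon^2)$ cross terms reduces the inequality to $\sum_{i\geq 2} v_1^T D^2_{x_1 x_i}b \, v_i \geq 0$, which is exactly the spacelike property for the symmetric form $\sum_{i\geq 2}[D^2_{x_1x_i}b + D^2_{x_ix_1}b]$.

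For rectifiability I would start from the first-order condition $D_{x_i}b = Du_i(x_i)$ that holds $\gamma$-a.e.\ on the support, where the $u_i$ are the Kantorovich dual potentials. Via \eqref{b_x} this reads $D_{x_i}f_i(x_i,\overline{z}) = Du_i(x_i)$. Differentiating along an approximate tangent vector $v$ and using \eqref{z_x} to write $D_v\overline{z} = -B^{-1}\sum_j D^2_{zx_j}f_j\, v_j \in \mathbb{R}^n$, the chain rule produces
\begin{equation*}
\bigl(D^2_{x_ix_i}f_i - D^2 u_i(x_i)\bigr)\, v_i \;=\; -D^2_{x_iz}f_i \cdot D_v\overline{z}.
\end{equation*}
Since \textbf{H1} forces $D^2_{x_iz}f_i$ to be invertible and the semi-concavity of $u_i$ gives almost-everywhere invertibility of the Hessian difference on the left, each $v_i$ is recovered from the single $n$-dimensional vector $D_v\overline{z}$. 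Thus the approximate tangent space to $\text{spt}(\gamma)$ has dimension at most $n$, from which $n$-rectifiability follows by standard geometric measure theory.

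The main obstacle, and the reason I would defer to Theorem 2.3 of \cite{P} rather than attempting a fully self-contained argument, is locating a set of full $\gamma$-measure on which the Hessian difference $D^2_{x_ix_i}f_i - D^2 u_i$ is invertible. Pinning this down rigorously requires delicate second-order regularity of the semi-concave potentials $u_i$, and this is precisely the measure-theoretic content supplied by \cite{P}.
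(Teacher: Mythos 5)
Your primary plan --- invoking Theorem 2.3 of \cite{P}, after checking via Proposition 3.3.1 there that under \textbf{H1}--\textbf{H3} the surplus $b$ is $C^2$ with the Hessian formulae \eqref{b_x}--\eqref{b_xij} --- is exactly what the paper does, since the paper states this result as a special case of that theorem and offers no further proof. Your supplementary sketches are mostly sound (the two-point $b$-monotonicity expansion for the spacelike condition matches \cite{P}), but note that in the rectifiability sketch the invertibility of $D^2_{x_ix_i}f_i - D^2u_i$ does not follow from semi-concavity of $u_i$ alone: it comes from the second-order dual optimality condition $D^2u_i - D^2_{x_ix_i}b \geq 0$ together with the strict positivity of $-D^2_{x_iz}f_i\cdot B^{-1}\cdot D^2_{zx_i}f_i$ (which uses \textbf{H1} and the negative definiteness of $B$ forced by \textbf{H2}--\textbf{H3}).
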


In view of the preceding result, we will let $t \in T$, where  
\begin{equation}
T:=spt(\gamma) \subseteq M_1 \times M_2 \times ...\times M_m, 
\end{equation}
and 
\begin{equation}\label{coordmaps}
x_1(t) \in M_1, x_2(t)\in M_2, ....,x_m(t) \in M_m
\end{equation}
will denote the coordinate maps.  Note that, as $x_1:T \rightarrow M_1$ is certainly Lipschitz, if $\mu_1$ is absolutely continuous in local coordinates, $\gamma$ must be absolutely continuous with respect to $n$ - dimensional Hausdorff measure $H^n$ on $T$.  Furthermore,, the maps $t \mapsto x_i(t)$ are all differentiable wherever the rectifiable set $T$ is smooth enough to differentiate, which is $H^n$  almost everywhere and therefore $\gamma$ almost everywhere.  Together with the spacelike condition in the preceding theorem, this implies that 

\begin{equation*}
\sum_{i\geq2}(D_tx_1(t))^T\cdot D^2_{x_1x_i}b(x_1(t),x_2(t),...,x_m(t))\cdot D_tx_i(t) \geq 0
\end{equation*}
$\gamma$ almost everywhere.
\section{Absolute continuity of $\nu$.}
In this section, we prove that the solution $\nu$ to \eqref{MAM} is absolutely continuous with respect to local coordinates.  This can be seen as a structural result on $\nu$, interesting in its own right, but it is also a key part of the machinery needed to prove uniqueness and structural results for \eqref{MK}.  

\newtheorem{gammanull}{Lemma}[section]
\begin{gammanull}
Assume conditions \textbf{H1-H4} and that $\mu_1$ is absolutely continuous with respect to Lebesgue measure.  Then, for any solution $\gamma$ to \eqref{MK}, 
\begin{equation*}
\gamma[\{t: |det(D_tx_1(t))|=0  \}]=0,
\end{equation*}
where $x_1(t)$ is defined by \eqref{coordmaps}.
\end{gammanull}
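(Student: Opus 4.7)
The plan is to exploit the rectifiability of $T := \text{spt}(\gamma)$ supplied by Theorem 2.3.1, together with the fact that the coordinate map $x_1 : T \to M_1$ is $1$-Lipschitz (being the restriction of an orthogonal projection), to reduce the statement to a Sard-type consequence of the area formula.

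First I would fix notation. Let $C := \{t \in T : |\det(D_t x_1(t))| = 0\}$. As $T$ is $n$-rectifiable and $x_1\big|_T$ is Lipschitz, the (approximate) differential $D_t x_1(t)$ is defined at $H^n$-a.e.\ $t \in T$, so $C$ is well defined up to an $H^n$-null set; moreover $C$ is Borel (modulo such a null set), hence $\gamma$-measurable, because $\gamma \ll H^n \lfloor T$ as recorded in the paragraph following Theorem 2.3.1.

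Next, I would apply the area formula for Lipschitz maps on $n$-rectifiable subsets of $\mathbb{R}^{nm}$ with values in $\mathbb{R}^n$. If $J(t)$ denotes the tangential Jacobian of $x_1\big|_T$ at $t$, then
\begin{equation*}
\int_C J(t)\, dH^n(t) \;=\; \int_{\mathbb{R}^n} H^0\!\left(C \cap x_1^{-1}(y)\right) d\mathcal{L}^n(y).
\end{equation*}
On $C$ the Jacobian $J(t) = |\det(D_t x_1(t))|$ vanishes, so the left side is zero, which forces $\mathcal{L}^n(x_1(C)) = 0$. Because $\mu_1 \ll \mathcal{L}^n$ and $(x_1)_\#\gamma = \mu_1$, we conclude
\begin{equation*}
\gamma(C) \;\leq\; \gamma\!\left(x_1^{-1}(x_1(C))\right) \;=\; \mu_1(x_1(C)) \;=\; 0,
\end{equation*}
which is exactly the assertion of the lemma.

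The hard part, though mild, will be justifying the identification of $|\det(D_t x_1(t))|$ with the tangential Jacobian appearing in the area formula. This boils down to the fact that $T$ is, up to an $H^n$-null set, a countable union of $C^1$ $n$-dimensional submanifolds of $M_1 \times \cdots \times M_m$, and at $H^n$-a.e.\ $t \in T$ the map $D_t x_1$ is precisely the restriction of the canonical projection $(v_1,\dots,v_m)\mapsto v_1$ to the approximate tangent plane to $T$ at $t$; its determinant (relative to orthonormal bases) therefore agrees with the tangential Jacobian of $x_1\big|_T$. Note that hypothesis \textbf{H4} is not used in the argument; only \textbf{H1}--\textbf{H3} (through Theorem 2.3.1) and the absolute continuity of $\mu_1$ play any role.
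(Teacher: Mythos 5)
Your proof is correct and follows essentially the same route as the paper: both arguments apply the area formula (which the paper calls the coarea formula, though since the source and target are both $n$-dimensional the two coincide) to the Lipschitz map $x_1$ on the rectifiable set $T$ to conclude that the image of the degenerate set has Lebesgue measure zero, and then invoke the marginal condition $(x_1)_\#\gamma=\mu_1$ together with absolute continuity of $\mu_1$ to conclude. Your observation that \textbf{H4} is not actually used here is accurate.
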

\begin{proof}
Set $A = \{t: |det(D_tx_1(t))|=0  \}$. We first show that the image of $A$ under $x_1$,  
\begin{equation*}
x_1(A) = \{x_1(t):|D_tx_1(t)|=0 \}
\end{equation*}
has Lebesgue measure zero; this is a simple application of the Lipschitz area formula.  Indeed, the coarea formula tells us that

\begin{eqnarray*}
\int_A|detD_tx_1(t)|dH^n(t) = \int_{M_1}\#[A \cap x_1^{-1}\{y\}]dH^n(y)=\int_{x_1(A)}\#[A \cap x_1^{-1}\{y\}]dH^n(y).
\end{eqnarray*}

Here, $H^n$ denotes $n$-dimensional Hausdorff measure on both $T$ and $M_1 \subseteq \mathbb{R}^n$ (of course, in the latter case, this coincides with Lebesgue measure).  As $|detD_tx_1(t)| =0$ on $A$ by definition, the left hand side above is equal to zero.  On the other hand, $\#[A \cap x_1^{-1}\{y\} ]\geq 1$ for all $y \in x_1(A)$, and so 

\begin{eqnarray*}
\int_{x_1(A)}\#[A \cap x_1^{-1}\{y\}]dH^n(y) \geq \int_{x_1(A)} 1 dH^n(y).
\end{eqnarray*}

The right hand side above is simply the Lebesgue measure $|x_1(A)|$ of $x_1(A)$, and so combining the above two equations yields $|x_1(A)|=0$.

By absolute continuity, $\mu_1(x_1(A))=0$.  Now, note that 
\begin{equation*}
A=\{t: det|D_tx_1(t)|=0  \} \subseteq x_1^{-1}(x_1(A))=x_1^{-1}(\{x_1(t):det|D_tx_1(t)|=0 \}),
\end{equation*} 
and therefore
\begin{eqnarray*}
\gamma (A)&\leq& \gamma (x_1^{-1}(x_1(A))) \\
&=&\mu_1 (x_1(A))\\
&=&0.
\end{eqnarray*}
\end{proof}

\newtheorem{zinv}[gammanull]{Lemma}
\begin{zinv}\label{zinv}
Assume conditions \textbf{H1}-\textbf{H5}.  Then the mapping $T \ni t \mapsto \overline{z}(x_1(t),x_2(t),...,x_m(t)) \in Z$ has an invertible derivative almost everywhere, with respect to $n$-dimensional Hausdorff measure on $T$.
\end{zinv}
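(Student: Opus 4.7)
The plan is to compute $D_t\overline{z}$ via the chain rule and formula~\eqref{z_x}, then combine the preceding lemma with the spacelike condition from Theorem~\ref{spacelike} to rule out any non-trivial kernel. Writing $A_i:=D^2_{zx_i}f_i(x_i(t),\overline{z}(t))$, the chain rule together with~\eqref{z_x} gives
\begin{equation*}
D_t\overline{z}(x_1(t),\ldots,x_m(t)) \;=\; -B^{-1}\sum_{i=1}^m A_i\,D_tx_i(t).
\end{equation*}
By the preceding lemma, $D_tx_1(t)$ is invertible at $H^n$-almost every $t\in T$, so I can factor
\begin{equation*}
D_t\overline{z} \;=\; -B^{-1}\Bigl[A_1 + \sum_{i\geq 2}A_i\,D_tx_i(t)\,(D_tx_1(t))^{-1}\Bigr]D_tx_1(t);
\end{equation*}
since $B$ is non-singular by \textbf{H3}, invertibility of $D_t\overline{z}$ is equivalent to invertibility of the bracketed matrix.

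Suppose $w\in T_tT$ lies in its kernel, so that $\sum_{i\geq 2}A_iD_tx_i(t)w = -A_1D_tx_1(t)w$. The vector $(D_tx_1(t)w,\ldots,D_tx_m(t)w)$ is tangent to $T$ at $(x_1(t),\ldots,x_m(t))$, so the spacelike inequality of Theorem~\ref{spacelike} applies. Substituting the identity $D^2_{x_1x_i}b = -A_1^{T}B^{-1}A_i$ from~\eqref{b_xij} and then invoking the kernel relation, this inequality reduces to
\begin{equation*}
(A_1D_tx_1(t)w)^{T}\, B^{-1}\,(A_1D_tx_1(t)w) \;\geq\; 0.
\end{equation*}

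The crucial observation is that $B$ is actually strictly negative definite. Indeed, since $Z$ is open and \textbf{H2} makes $\overline{z}(x_1,\ldots,x_m)$ an interior unconstrained maximizer of $z\mapsto\sum_i f_i(x_i,z)$, the second-order condition at this maximum forces $B$ to be negative semi-definite; \textbf{H3} rules out any null direction, so $B$ (and hence $B^{-1}$) is negative definite. The displayed inequality therefore collapses to $A_1D_tx_1(t)w=0$; non-degeneracy (\textbf{H1}) of $f_1$ gives $D_tx_1(t)w=0$; and the preceding lemma yields $w=0$. Thus the bracketed matrix has trivial kernel almost everywhere, proving the claim.

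The only subtle step, I would say, is recognizing that the standing hypotheses \textbf{H1}--\textbf{H3} implicitly force $B$ to be strictly negative definite rather than merely non-singular; once this is noted, the spacelike inequality plugs directly into the chain-rule computation and kills the kernel.
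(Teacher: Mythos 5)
Your proof is correct and rests on the same three ingredients as the paper's argument: the chain-rule computation of $D_t\overline{z}$ via \eqref{z_x}, the spacelike condition from Theorem \ref{spacelike}, and the observation (from the first-order maximality in \textbf{H2} together with \textbf{H3}) that $B$ is in fact strictly negative definite. The only difference is one of packaging: you trace a kernel vector of $D_t\overline{z}$ and show it must vanish, whereas the paper left-multiplies $D_t\overline{z}$ by $(D_tx_1)^T\cdot D^2_{x_1z}f_1$ and shows the resulting quadratic form is positive definite --- the same positivity, presented two ways.
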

\begin{proof}
Wherever $(x_1(t),...,x_m(t))$ is differentiable (which is almost everywhere, by Theorem \ref{spacelike}) we have, by the chain rule and the derivative formulae \eqref{z_x}:
\begin{eqnarray*}
D_tz(x_1(t),....x_m(t)) &= &\sum_{i=1}^mD_{x_i}z(x_1(t),...,x_m(t))\cdot D_tx_i(t)\\
&=&-\sum_{i=1}^m  [B(x_1,x_2,...,x_m)] ^{-1} \cdot D^2_{zx_i}f_i(x_i,\overline{z}(x_1.x_2,...,x_m)) \cdot D_tx_i,
\end{eqnarray*}
where we have suppressed the argument $t$ in the last line.  Multiplying by 
\begin{equation*}
\big[D_tx_1(t)\big]^T\cdot\big[ D^2_{x_1z}f_1(x_1(t),\overline{z}(x_1(t),x_2(t),...,x_m(t)))\big],
\end{equation*}
and suppressing the arguments of all the functions, we have

\begin{eqnarray}
&&(D_tx_1)^T \cdot D^2_{x_1z}f_1 \cdot D_t\overline{z} = -\sum_{i=1}^m(D_tx_1)^T \cdot D^2_{x_1z}f_1 \cdot B^{-1}\cdot D^2_{zx_i}f_i\cdot D_tx_i\nonumber \\
&&=-(D_tx_1)^T \cdot D^2_{x_1z}f_1 \cdot B^{-1}\cdot D^2_{x_1z}f_1\cdot  D_tx_1 -\sum_{i=2}^mD_tx_1^T\cdot D^2_{x_1z}f_1\cdot B^{-1}\cdot D^2_{zx_i}f_i\cdot D_tx_i \nonumber \\
&&=-(D_tx_1)^T \cdot D^2_{x_1z}f_1 \cdot B^{-1}\cdot D^2_{zx_1}f_1\cdot D_tx_1 +\sum_{i=2}^mD_tx_1^T\cdot D^2_{x_1x_i}b\cdot D_tx_i \nonumber\\\label{D_tz}
\end{eqnarray}
Note that we have used formula \eqref{b_xij}.

Now, by the maximality of $z \mapsto \sum_{i=1}f_i(x_i,z)$ at $\overline{z}(x_1,x_2,...,x_m)$, $B(x_1,x_2,..,x_m):= \sum_{i=1}D^2_{zz}f_i(x_i,z)$ is negative semi-definite; by assumption \textbf{H3} it must therefore be negative definite.  Therefore, at every point where $|D_tx_1(t)| \neq 0$,  
\begin{equation*}
-(D_tx_1)^T \cdot D^2_{x_1z}f_1 \cdot B^{-1}\cdot D^2_{zx_1}f_1\cdot D_tx_1
\end{equation*}
is positive definite.  As the second term in \eqref{D_tz} is positive semi-definite by  Theorem \ref{spacelike}, the left hand side 
\begin{equation*}
D_tx_1^T \cdot D^2_{x_1z}f_1 \cdot D_t\overline{z}
\end{equation*}
must be positive definite and hence invertible.  Therefore, $D_t\overline{z}$ must be invertible, as desired.
\end{proof}

As we recalled in Theorem \ref{cesol}, Carlier and Ekeland proved that there exists a unique optimizer $\nu$ to \eqref{MAM} under the conditions  \textbf{H1} and \textbf{H4} \cite{CarEke}.  We prove below the main result of this section; that under conditions \textbf{H1}-\textbf{H5}, this optimizer is absolutely continuous in local coordinates.

\newtheorem{reg}[gammanull]{Theorem}

\begin{reg}\label{reg}
Assume $\mu_1$ is absolutely continuous with respect to Lebesgue measure.  Under conditions \textbf{H1}-\textbf{H5}, the maximizer $\nu$ to \eqref{MAM} is absolutely continuous with respect to Lebesgue measure.
\end{reg}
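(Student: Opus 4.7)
The plan is to show $\nu(E) = 0$ for every Lebesgue-null set $E \subseteq Z$. By Theorem \ref{equivalent}, I fix a solution $\gamma$ to \eqref{MK} with $\overline{z}_{\#}\gamma = \nu$. Writing $T = spt(\gamma)$ and $A = \overline{z}^{-1}(E) \cap T$, the task reduces to $\gamma(A) = 0$. Since $x_1 : T \to M_1$ is Lipschitz and $\mu_1$ is absolutely continuous, the remark following Theorem \ref{spacelike} gives $\gamma \ll H^n\big|_T$, so it suffices to prove $H^n(A) = 0$.

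The main step is a single application of the area formula for Lipschitz maps on an $n$-rectifiable set. Since $b$ is $C^2$, the map $\overline{z}$ is $C^1$ on $M_1 \times \ldots \times M_m$, hence locally Lipschitz, and $T$ is $n$-rectifiable by Theorem \ref{spacelike}. A standard countable exhaustion of $T$ by pieces on which $\overline{z}|_T$ is Lipschitz lets us apply the area formula, yielding
\begin{equation*}
\int_A |\det D_t \overline{z}(t)|\, dH^n(t) = \int_Z \#\big(A \cap \overline{z}^{-1}\{y\}\big)\, dH^n(y).
\end{equation*}
The right-hand integrand vanishes outside $E$ (since $\overline{z}(A) \subseteq E$), and $|E| = 0$, so the right-hand side is zero. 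However, Lemma \ref{zinv} guarantees that $|\det D_t \overline{z}| > 0$ at $H^n$-almost every point of $T$, and in particular on almost every point of $A$. This forces $H^n(A) = 0$, as required.

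I do not expect serious obstacles: the only technical care is in ensuring that the area formula is applied correctly on the rectifiable set $T$, where the intrinsic (tangential) Jacobian of $\overline{z}|_T$ coincides $H^n$-a.e.\ with $|\det D_t \overline{z}|$ computed along the approximate tangent plane of $T$. All of the substantive analytic work has already been absorbed into Lemma \ref{zinv}, whose proof combined the spacelike inequality from Theorem \ref{spacelike} with the negative-definiteness of $B$ (a consequence of \textbf{H3} together with maximality of $\overline{z}$) to force invertibility of $D_t \overline{z}$; everything here is then simply packaging via geometric measure theory.
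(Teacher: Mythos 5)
Your proof follows essentially the same route as the paper's: both apply the area formula for the Lipschitz map $\overline{z}$ restricted to the $n$-rectifiable set $T$, and both invoke Lemma \ref{zinv} to control $\det D_t\overline{z}$. The difference is one of bookkeeping. The paper plugs $f(t)=g(t)h(t)/|\det D_t\overline{z}(t)|$ into the formula, where $h$ is the density of $\gamma$ with respect to $H^n\big|_T$, and concludes $\nu(A)=\gamma(\overline{z}^{-1}(A))=0$ directly; you instead apply the formula to the indicator of $A$, get $\int_A|\det D_t\overline{z}|\,dH^n=0$, and then use a.e.\ nonvanishing of the Jacobian to conclude $H^n(A)=0$. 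Your version is arguably cleaner since it avoids the division.

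One small caution: your final step needs $|\det D_t\overline{z}|>0$ for $H^n$-a.e.\ $t\in T$, and while Lemma \ref{zinv} is \emph{stated} that way, its proof only establishes invertibility at points where $D_t x_1(t)$ is nonsingular, and Lemma 3.1 guarantees that only $\gamma$-a.e.\ rather than $H^n$-a.e.\ (since $\gamma\ll H^n\big|_T$ but the converse need not hold on the support). The paper's own proof is immune to this because the density $h$ vanishes $H^n$-a.e.\ on the exceptional $\gamma$-null set, so the quotient $g h/|\det D_t\overline{z}|$ is still well-defined. Your argument can be made immune too by weakening the conclusion from $H^n(A)=0$ to $\gamma(A)=0$: since $\int_A|\det D_t\overline{z}|\,dH^n=0$ forces $H^n(A\setminus N)=0$ where $N=\{|\det D_t\overline{z}|=0\}$, and $\gamma(N)=0$, one gets $\gamma(A)\le\gamma(A\setminus N)+\gamma(N)=0$, which is what is actually needed. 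With that minor adjustment the argument is sound.
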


\begin{proof}
By Theorem \ref{equivalent}, we have $\nu = \overline{z}_{\#}\gamma$.  Now, Theorem \ref{spacelike} implies that
\begin{equation*}
\gamma =:h(t)dt
\end{equation*}
is concentrated on the $n$-rectifiable set $T$, and is absolutely continuous with respect to $n$-dimensional Hausdorff measure (see the discussion in subsection 2.3); we denote its density by $h$.  Furthermore, the map $T \ni t \mapsto \overline{z}(x_1(t),x_2(t),...,x_m(t)) \in Z$ is differentiable with an invertible derivative almost everywhere, by Lemma \ref{zinv}.  

Now, choose a set $A \subseteq Z$, negligible in local coordinates.  We need to show $\nu(A)=0$, or, equivalently $\gamma(\overline{z}^{-1}(A)) =0$.  Let $g:T \rightarrow \mathbb{R}$ be the characteristic function of the $\overline{z}^{-1}(A)$ and set 
\begin{equation*}
f(t) =g(t)h(t)/|det(\overline{z}(x_1(t),x_2(t),...,x_m(t)))|.
\end{equation*}
Then the co-area formula applied to $f$ yields:
\begin{eqnarray*}
\nu(A)&=&\gamma(\overline{z}^{-1}(A))\\
&=&\int_{T}g(t)h(t)dH^n(t) \\
&=&\int_{T}f(t)|detD_t\overline{z}(x_1(t),x_2(t),...,x_m(t))|dH^n(t) \\
&=&\int_{Z}\sum_{\overline{z}^{-1}(z)}f(t)dH^n(z)\\
& =& \int_{A}\sum_{\overline{z}^{-1}(z)}f(t)dH^n(z) +\int_{Z \setminus A }\sum_{\overline{z}^{-1}(z)}f(t)dH^n(z)
\end{eqnarray*}
The first term above is zero because $|A| =0$. We now show that the second term is zero as well.

Now, note that if $z \notin A$, then $\overline{z}^{-1}\{z\}$ does not intersect $\overline{z}^{-1}(A)$.  Therefore, for all $t \in \overline{z}^{-1}\{z\}$, $g(t)=0$ and therefore, for almost all $t \in \overline{z}^{-1}\{z\}$, $f(t)=0$.   It follows that $f(t) =0$ for all $t \in \overline{z}^{-1}\{z\}$, for $z \in Z \setminus A$, and therefore the integrand in the second term above is zero.  This completes the proof.
\end{proof}
\section{Monge solutions and uniqueness in the multi-marginal problem}
In this section we prove our main result; namely, that the optimizer $\gamma$ for \eqref{MK} is unique and induces a Monge solution.

\newtheorem{main}{Theorem}[section]

\begin{main}\label{main}
Assume $\mu_1$ is absolutely continuous with respect to Lebesgue measure.  Under conditions \textbf{H1-H5}, there exist Borel maps $F_i: Z \rightarrow X_i$ such that any optimizer $\gamma$ for \eqref{MK} is equal to the pushforward $(F_1,F_2,...,F_m)_{\#} \nu$, where $\nu$ is the unique optimizer for \eqref{MAM}. 
\end{main}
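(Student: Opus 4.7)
The plan is to reduce the multi-marginal problem to a family of classical two-marginal transport problems between $\nu$ and each $\mu_i$, exploiting the absolute continuity of $\nu$ established in Theorem \ref{reg}. By Theorem \ref{cesol} and the first part of Theorem \ref{equivalent}, any optimizer $\gamma$ for \eqref{MK} satisfies $\overline{z}_{\#}\gamma = \nu$, where $\nu$ is the \emph{unique} maximizer of \eqref{MAM}. For each $i$, I would introduce the two-marginal coupling
\begin{equation*}
\pi_i := (\overline{z}, x_i)_{\#}\gamma
\end{equation*}
on $Z \times M_i$; by construction its marginals are $\nu$ and $\mu_i$.

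The first key step is to show that each $\pi_i$ is optimal for the two-marginal transport problem \eqref{trandist} between $\nu$ and $\mu_i$ with surplus $f_i$. Since $b(x_1,\ldots,x_m) = \sum_i f_i(x_i, \overline{z}(x_1,\ldots,x_m))$ holds pointwise by definition of $\overline{z}$,
\begin{equation*}
\int b \, d\gamma = \sum_{i=1}^m \int f_i \, d\pi_i \leq \sum_{i=1}^m T_{f_i}(\nu, \mu_i).
\end{equation*}
On the other hand, the second part of Theorem \ref{equivalent} produces, from any maximizer $\nu$ of \eqref{MAM}, some $\gamma$ with $\int b \, d\gamma \geq \sum_i T_{f_i}(\nu, \mu_i)$ (obtained by gluing optimal two-marginal couplings along their common marginal $\nu$); thus the values of \eqref{MK} and \eqref{MAM} coincide and equality must hold above. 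This forces $\int f_i \, d\pi_i = T_{f_i}(\nu, \mu_i)$ for every $i$.

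Next, I would apply classical two-marginal optimal transport theory to each $\pi_i$. By hypothesis \textbf{H5}, $f_i$ is $(z,x_i)$-twisted, and by Theorem \ref{reg} the marginal $\nu$ is absolutely continuous with respect to Lebesgue measure. Standard results (Levin, Gangbo--McCann, Caffarelli) then guarantee that $\pi_i$ is the unique optimizer and is concentrated on the graph of a Borel map $F_i: Z \rightarrow M_i$, so $\pi_i = (\mathrm{id}, F_i)_{\#}\nu$. Equivalently, $x_i = F_i(\overline{z}(x_1,\ldots,x_m))$ for $\gamma$-almost every $(x_1,\ldots,x_m)$.

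To conclude, for any bounded Borel $\varphi$ on $M_1 \times \cdots \times M_m$, the graph relation above gives
\begin{equation*}
\int \varphi \, d\gamma = \int \varphi(F_1(\overline{z}),\ldots,F_m(\overline{z})) \, d\gamma = \int \varphi(F_1(z),\ldots,F_m(z)) \, d\nu(z),
\end{equation*}
so $\gamma = (F_1,\ldots,F_m)_{\#}\nu$. Since each $F_i$ depends only on $\nu$ and $\mu_i$ through its two-marginal problem, and $\nu$ is uniquely determined by Theorem \ref{cesol}, uniqueness of $\gamma$ follows immediately. I expect the main obstacle to be pinning down the optimality of each $\pi_i$, which rests on the equality of the optimal values for \eqref{MK} and \eqref{MAM}; this is implicit in Theorem \ref{equivalent} but deserves to be made explicit via the gluing construction described above.
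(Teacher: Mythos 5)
Your argument is correct and follows essentially the same route as the paper: push $\gamma$ forward by $(\overline{z},x_i)$ to obtain couplings of $\nu$ with each $\mu_i$, use the optimality of these couplings together with absolute continuity of $\nu$ (Theorem \ref{reg}) and the twist condition \textbf{H5} to get unique graph maps $F_i$, and then translate $x_i = F_i(\overline{z}(x_1,\ldots,x_m))$ $\gamma$-a.e.\ into $\gamma = (F_1,\ldots,F_m)_{\#}\nu$. The only substantive difference is that you re-derive the optimality of each $\pi_i = (\overline{z},x_i)_{\#}\gamma$ via the value-matching/gluing argument, whereas the paper simply cites this as a result already proved by Carlier and Ekeland; your derivation is fine, but it is duplicating a cited fact rather than offering a different proof.
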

\begin{proof}
By Theorem \ref{reg}, $\nu$ is absolutely continuous with respect to Lebesgue measure and by \textbf{H5}, $f_i$ is $(z,x_i)$-twisted.  Note also that as the measure $\gamma$ on the compact set $M_1 \times M_2 \times...\times M_m$ has compact support and $\overline{z}$ is continuous, $\nu = \overline{z}_{\#} \gamma$ has compact support as well.  It is a well known result in optimal transport theory that these two conditions imply that each optimal coupling $\gamma_i$ in \eqref{trandist} is unique and is concentrated on the graph $\{z,F_i(z)\}$ of a unique mapping $F_i:Z \rightarrow M_i$.  Set 
\begin{equation*}
\tilde{\gamma} = (F_1,F_2,F_3,...,F_m)_{\#}\nu;
\end{equation*} 
we will show $\tilde{\gamma} = \gamma$.

Carlier and Ekeland proved that for any optimal $\gamma$, we have $\overline{z}_{\#} \gamma = \nu$ (Theorem \ref{equivalent}).  They also proved that the measure 
\begin{equation*}
(\pi_i,\overline{z}(x_1,x_2,...,x_m))_{\#}\gamma
\end{equation*}
on $M_i \times Z$ is optimal in \eqref{trandist}, where $\pi_i:M_1 \times M_2 \times...\times M_m \rightarrow M_i$ is the canonical projection.   It follows from the remarks above that 
\begin{equation*}
(\pi_i,\overline{z}(x_1,x_2,...,x_m))_{\#}\gamma =\gamma_i = (F_i, Id)_{\#}\nu.
\end{equation*}
Note that this implies that, $\gamma$ almost everywhere, we have 
\begin{equation*}
x_i=    \pi_i(x_1,x_2,...,x_m) = F_i(\overline{z}(x_1,x_2,...,x_m)).
\end{equation*}
    Therefore, $\gamma$ almost everywhere we have 
\begin{eqnarray*}
(x_1,x_2,....,x_m)& =& \Big(F_1\big(\overline{z}(x_1,x_2,...,x_m)\big), F_2\big(\overline{z}(x_1,x_2,...,x_m)\big),...,F_m\big(\overline{z}(x_1,x_2,...,x_m)\big)\Big)\\
&:=&F(\overline{z}(x_1,x_2,...,x_m)),
\end{eqnarray*}
where we have defined $F:Z \rightarrow M_1 \times M_2 \times ...\times M_m$ as $F:=(F_1,F_2,...,F_m)$.  Now, given any continuous function $H:M_1 \times M_2 \times...\times M_m \rightarrow \mathbb{R}$, this implies 
\begin{eqnarray*}
\int_{M_1 \times M_2\times ...\times M_m}H(x_1,...,x_m)d\gamma(x_1,...,x_m) &=&\int_{M_1 \times M_2\times ...\times M_m}H(F(\overline{z}(x_1,...,x_m)))d\gamma(x_1,...,x_m)\\
&=&\int_{Z}H(F(z))d\nu(z) \\
&=&\int_{M_1 \times M_2\times ...\times M_m}H(x_1,...,x_m)d\tilde{\gamma}(x_1,...,x_m)
\end{eqnarray*}
As $H \in C^0(M_1 \times M_2 \times...\times M_m)$ was arbitrary, this implies $\gamma = \tilde{\gamma}$, as desired.
\end{proof}

\newtheorem{maincor}[main]{Corollary}
\begin{maincor}\label{maincor}
Under the conditions in Theorem \ref{main}, the optimal measure $\gamma$ is unique. 
\end{maincor}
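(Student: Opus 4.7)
The plan is that this corollary should follow essentially for free from Theorem \ref{main}, so there is no real obstacle to overcome: the hard work has already been done in producing the explicit formula $\gamma = (F_1,F_2,\ldots,F_m)_{\#}\nu$, and uniqueness is just the observation that every object on the right-hand side is itself uniquely determined.

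More concretely, I would argue as follows. Let $\gamma$ and $\gamma'$ be any two optimizers for \eqref{MK}. Applying Theorem \ref{main} to each yields
\begin{equation*}
\gamma = (F_1,F_2,\ldots,F_m)_{\#}\nu, \qquad \gamma' = (F_1',F_2',\ldots,F_m')_{\#}\nu',
\end{equation*}
where $\nu,\nu'$ are the corresponding solutions of \eqref{MAM} produced by Theorem \ref{equivalent}, and where each $F_i$ (respectively $F_i'$) is the Monge map realizing the optimal coupling between $\nu$ and $\mu_i$ (respectively $\nu'$ and $\mu_i$) in the two-marginal problem \eqref{trandist}.

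Now I would invoke uniqueness at each layer. First, Theorem \ref{cesol} guarantees that the maximizer of \eqref{MAM} is unique, so $\nu = \nu'$. Next, Theorem \ref{reg} tells us that this common $\nu$ is absolutely continuous with respect to Lebesgue measure, and assumption \textbf{H5} states that each $f_i$ is $(z,x_i)$-twisted; together these are exactly the hypotheses under which the two-marginal optimal transport problem with surplus $f_i$ from $\nu$ to $\mu_i$ admits a unique optimal coupling, itself supported on the graph of a unique Monge map (this is precisely the standard fact invoked in the proof of Theorem \ref{main}). Hence $F_i = F_i'$ $\nu$-almost everywhere for each $i$. Pushing the common measure $\nu$ forward under the common map $(F_1,\ldots,F_m)$ therefore gives $\gamma = \gamma'$, completing the proof.

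The only subtlety worth flagging is that the $F_i$ are only determined $\nu$-almost everywhere rather than pointwise, but this is harmless: the pushforward $(F_1,\ldots,F_m)_{\#}\nu$ depends only on the $\nu$-almost-everywhere equivalence class of the map, so any $\nu$-null redefinition of $F_i$ produces the same measure $\gamma$. Beyond this trivial remark, no additional work is required.
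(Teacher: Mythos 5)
Your proof is correct and follows essentially the same approach as the paper: uniqueness of $\nu$ comes from Theorem \ref{cesol}, and uniqueness of the Monge maps $F_i$ from the standard two-marginal theory (using absolute continuity of $\nu$ from Theorem \ref{reg} and the twist condition \textbf{H5}), whence the pushforward formula from Theorem \ref{main} forces any two optimizers to coincide. The paper states this in two sentences; your version merely spells out the intermediate objects $\gamma', \nu', F_i'$ and the $\nu$-a.e. caveat, which is a harmless elaboration of the same argument.
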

\begin{proof}
The result follows immediately from the uniqueness of $\nu$ and the maps $F_i$ in the preceding theorem.  The former is proven by Carlier and Ekeland (Theorem \ref{cesol} here), while the latter follows easily from the standard theory of optimal transportation.
\end{proof}

Finally, we show that the (unique) optimal measure $\gamma$ induces a Monge solution.
\newtheorem{monge}[main]{Corollary}
\begin{monge}\label{monge}
Under the conditions in Theorem \ref{main}, the optimal measure $\gamma$ has the form $(Id,G_2,G_3,...G_m)_{\#} \mu_1$, for unique measurable maps $G_i:M_1 \rightarrow M_i$.
\end{monge}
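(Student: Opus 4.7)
The plan is to combine Theorem \ref{main} with the standard two marginal Monge theorem applied to the coupling $\gamma_1 := (\pi_1,\overline{z})_{\#}\gamma$ between $\mu_1$ and $\nu$ with surplus $f_1$. Theorem \ref{equivalent} and the proof of Theorem \ref{main} give $\gamma_1 = (F_1,Id)_{\#}\nu$. Since $\mu_1$ is absolutely continuous by hypothesis and $f_1$ is $(x_1,z)$-twisted by \textbf{H4}, standard two marginal optimal transport theory (Gangbo--McCann, Levin, Caffarelli, as cited in the introduction) yields a Borel map $Z_1:M_1\to Z$ such that we also have $\gamma_1 = (Id,Z_1)_{\#}\mu_1$.

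Having two graphical representations of the same coupling $\gamma_1$ forces $Z_1$ to be an almost everywhere inverse of $F_1$: testing the identity $(F_1,Id)_{\#}\nu = (Id,Z_1)_{\#}\mu_1$ against the Borel sets $\{(x_1,z):z=Z_1(x_1)\}$ and $\{(x_1,z):x_1=F_1(z)\}$ respectively, one deduces $Z_1\circ F_1 = Id$ $\nu$-a.e.\ and $F_1\circ Z_1 = Id$ $\mu_1$-a.e. Note also that $(Z_1)_{\#}\mu_1 = \nu$, as this is just the second marginal of $\gamma_1$. I then set $G_1:=Id$ and $G_i := F_i\circ Z_1$ for $i=2,\dots,m$, and verify $(G_1,G_2,\dots,G_m)_{\#}\mu_1 = \gamma$ by testing against a continuous $H:M_1\times\cdots\times M_m\to\mathbb{R}$:
\begin{equation*}
\int H\,d\gamma = \int H(F_1(z),\dots,F_m(z))\,d\nu(z) = \int H(x_1,G_2(x_1),\dots,G_m(x_1))\,d\mu_1(x_1),
\end{equation*}
where the first equality is Theorem \ref{main} and the second uses $(Z_1)_{\#}\mu_1=\nu$ together with $F_1\circ Z_1(x_1)=x_1$ for $\mu_1$-a.e.\ $x_1$.

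Uniqueness of the $G_i$ ($\mu_1$-a.e.) then follows from Corollary \ref{maincor}: if $(Id,\tilde G_2,\dots,\tilde G_m)_{\#}\mu_1$ also represents $\gamma$, projecting onto the $(1,i)$ coordinates gives $(Id,G_i)_{\#}\mu_1 = (Id,\tilde G_i)_{\#}\mu_1$, which forces $G_i = \tilde G_i$ $\mu_1$-a.e. The one slightly delicate step, and the main potential obstacle, is securing the almost everywhere inverse relationship between $F_1$ and $Z_1$; this is however standard once one has the two Monge representations of the unique optimal plan $\gamma_1$, and everything else reduces to routine manipulation of pushforwards.
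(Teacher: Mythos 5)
Your proposal is correct and takes essentially the same approach as the paper: the paper simply sets $G_i = F_i \circ F_1^{-1}$ and asserts the pushforward identity, implicitly using the invertibility of $F_1$, whereas you make that step explicit by constructing the a.e.\ inverse $Z_1$ from the $(x_1,z)$-twist condition \textbf{H4} and the absolute continuity of $\mu_1$. Your more careful treatment of the inverse (via the two graphical representations of $\gamma_1$) and of the uniqueness of the $G_i$ is a welcome elaboration of what the paper leaves to the reader.
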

\begin{proof}
Setting $G_i = F_i \circ F_1^{-1}$, we easily obtain $ (Id,G_2,G_3,...G_m)_{\#} \mu_1=\gamma$.
\end{proof}
\newtheorem{econint}[main]{Remark }
\begin{econint}
\textbf{\emph{(economic interpretation)}} It seems appropriate to briefly discuss the economic interpretation of Theorem \ref{main}, in terms of the matching models in \cite{cmn} and \cite{CarEke}.  In particular, those two papers discuss slightly different notions of purity. 

 Carlier and Ekeland defined an equilibrium to be pure when workers of the same type necessarily signed contracts of the same type.  In mathematical terms, this means that for the optimal $\nu$ in \eqref{MAM}, each optimal coupling $\gamma_i$ between $\mu_i$ and $\nu$ is concentrated on the graph of a function $F_i: M_i \rightarrow Z$.  

Chiappori, McCann and Nesheim, on the other hand, defined an equilibrium in the $m=2$ case to be pure when buyers of the same type almost always conducted business with sellers of the same type. This means that the optimal measure $\gamma$ in $\eqref{MK}$ is concentrated on the graph of a function $G_2:M_1 \rightarrow M_2$. 

The purity notion of Chiappori, McCann and Nesheim extends naturally to the $m\geq 3$ setting.  We will say that an equilibrium is pure, in the sense of Chiappori, McCann and Nesheim, if the optimal $\gamma$ in \eqref{MK}  is concentrated on the graph of a function over $x_1$.  Economically, this means that workers in $M_1$ of the same type almost always conduct business with a team of the same type.

Purity of the equilibrium in the sense of Carlier and Ekeland is proven in \cite{CarEke}; Corollary \ref{main} can be interpreted as purity in the sense of Chiappori, McCann and Nesheim.
\end{econint}
\section{Examples and applications}

In this section we present several examples and applications of our main theorem.  The examples in the first subsection demonstrates that Theorem \ref{main} is a generalization of the seminal paper of Gangbo and \'Swi\c{e}ch \cite{GS}; in fact, we show that our work here implies a partial generalization of their result to manifolds.  The example in subsection 5.2 shows that Corollary \ref{monge} also encompasses a result of Heinich \cite{H}.  In subsection 5.3, we present an example of a surplus function which is \textit{not} of the form \eqref{hp}, but which satisfies different conditions required for uniqueness and Monge solutions in \cite{P1}; this verifies that Corollaries \ref{maincor} and \ref{monge} do not generalize the main theorem in \cite{P1}.  On the other hand, we exhibit an example in subsection 5.4 which \textit{is} of the form \eqref{hp}, but does not satisfy the conditions in \cite{P1}.  Therefore, the main theorem in \cite{P1} does not generalize Corollaries \ref{maincor} and \ref{monge} either. 

\subsection{Generalization of the two marginal quadratic cost on manifolds}
In this subsection, we consider the case where each $M_i \subseteq M$ is a bounded, open subset of some common Riemannian manifold $M$, $Z=M$ and the $-f_i$ are all the Riemannian distance squared.  In this case, we have:
\begin{equation}\label{gsman}
b(x_1,x_2,...,x_m) = \sup_{z \in Z}\sum_{i=1}^m -d^2(x_i,z)=\sup_{z \in M}\sum_{i=1}^m -d^2(x_i,z)
\end{equation}
Note that maximizing \eqref{MK} is equivalent to \textit{minimizing}  
\begin{equation*}
\int_{M_1 \times M_2 \times ...\times M_m} c(x_1,x_2,...,x_m) d\gamma (x_1,x_2,...,x_m)
\end{equation*}
over the same set of measures for the \textit{cost} function

\begin{equation}
c(x_1,x_2,...,x_m)=-b(x_1,x_2,...,x_m) = \inf_{z \in M}\sum_{i=1}^m d^2(x_i,z)
\end{equation}
Recall that $M$ is called a Hadamard manifold if it is complete, simply connected and has non-positive sectional curvature.  Our main result easily implies the following:

\newtheorem{riemmann}{Corollary}[subsection]
\begin{riemmann}\label{riemmann}
Let $M$ be a Hadamard manifold.  Let $\mu_1,\mu_2,...\mu_m$ be compactly supported Borel probability measures on $M$ and assume $\mu_1$ is absolutely continuous with respect to local coordinates.  Then the solution $\gamma$ to \eqref{MK} with surplus $b$  given by \eqref{gsman} is unique and is concentrated on a graph over $x_1$.
\end{riemmann}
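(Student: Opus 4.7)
The plan is to reduce the Hadamard-manifold setting to the framework of Theorem \ref{main} by verifying hypotheses \textbf{H1}--\textbf{H5} for the functions $f_i(x_i,z) := -d^2(x_i,z)$, and then invoking Theorem \ref{main} together with Corollary \ref{monge}. Since $M$ is simply connected, complete, and of non-positive sectional curvature, the exponential map $\exp_p : T_pM \to M$ at any basepoint $p \in M$ is a global $C^\infty$ diffeomorphism by the Cartan--Hadamard theorem; pulling back by $\exp_p^{-1}$ identifies $M$ smoothly with $\mathbb{R}^n$, so we may treat $M_i$ and $Z := M$ (or, if one prefers, a bounded open ball large enough to contain the supports of all $\mu_i$) as bounded open subsets of $\mathbb{R}^n$. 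Absolute continuity of $\mu_1$ is preserved under this smooth diffeomorphism.

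The verification of \textbf{H1}--\textbf{H5} then rests on three classical facts. First, the cut locus of any point is empty, so $d^2 \in C^\infty(M \times M)$. Second, the identity $D_x d^2(x,z) = -2\exp_x^{-1}(z)$ exhibits $z \mapsto D_{x_i} f_i(x_i,z)$ and (by symmetry of $d$) $x_i \mapsto D_z f_i(x_i,z)$ as global diffeomorphisms onto their respective tangent spaces; consequently $D^2_{x_i z}f_i$ is invertible (giving smoothness plus non-degeneracy, \textbf{H1}), $f_1$ is $(x_1,z)$-twisted (\textbf{H4}), and every $f_i$ is $(z,x_i)$-twisted (\textbf{H5}). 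Third, by Rauch comparison, the Hessian of $d^2(x,\cdot)/2$ is pointwise bounded below by the identity, so $z \mapsto \sum_{i=1}^m d^2(x_i,z)$ is strictly convex and coercive on $M$; this produces a unique minimizer $\overline{z}(x_1,\dots,x_m)$ (yielding \textbf{H2}) and makes $B(x_1,\dots,x_m) = -\sum_i D^2_{zz}d^2(x_i,\overline{z})$ negative definite, in particular non-singular (\textbf{H3}).

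With \textbf{H1}--\textbf{H5} in hand, Theorem \ref{main} asserts that any optimizer $\gamma$ for \eqref{MK} equals $(F_1,\dots,F_m)_\# \nu$ for Borel maps $F_i$ and the unique optimizer $\nu$ for \eqref{MAM}, and Corollary \ref{monge} then gives $\gamma = (\mathrm{Id},G_2,\dots,G_m)_\# \mu_1$ with $G_i = F_i \circ F_1^{-1}$; this simultaneously delivers uniqueness of $\gamma$ and its concentration on a graph over $x_1$. The only non-formal step is the global convexity and twist of $d^2$ on $M$, which is where the Hadamard hypothesis is genuinely used; the absence of conjugate points provided by Cartan--Hadamard, together with the Rauch Hessian bound, is precisely what converts the pointwise infinitesimal conditions \textbf{H1}--\textbf{H5} into global ones, after which everything reduces to the main theorem.
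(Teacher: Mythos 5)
Your proof is correct and follows essentially the same route as the paper: identify $M$ with $\mathbb{R}^n$ via Cartan--Hadamard, verify \textbf{H1}--\textbf{H5} for $f_i = -d^2$ using the absence of a cut locus and the uniform convexity of $d^2(x_i,\cdot)$ along geodesics, then invoke Theorem \ref{main} and Corollaries \ref{maincor}, \ref{monge}. The paper's proof is more terse (it simply asserts that \textbf{H1}, \textbf{H4}, \textbf{H5} are ``easily verified'' and that uniform geodesic convexity is ``well known''), whereas you supply the underlying facts --- the identity $D_x d^2(x,z) = -2\exp_x^{-1}(z)$ driving the twist and non-degeneracy conditions, and the Rauch/Hessian comparison driving convexity and the sign of $B$ --- but there is no difference in strategy.
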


\begin{proof}
By the Cartan-Hadamard theorem, $M$ is diffeomorphic to $\mathbb{R}^n$ and has no cut locus.  Conditions \textbf{H1,H4} and \textbf{H5} are then easily verified.  Furthermore, it is well known that each mapping
\begin{equation*}
z \mapsto d^2(x_i,z)
\end{equation*}
is uniformly convex along geodesics.  Therefore, 
\begin{equation*}
 z \mapsto \sum_{i=1}^m d^2(x_i,z)
\end{equation*}
is uniformly convex along geodesics, which implies conditions \textbf{H2} and \textbf{H3}.  The result now follows directly from Theorem \ref{main} and Corollaries \ref{maincor} and \ref{monge}.
\end{proof}

\newtheorem{gengs}[riemmann]{Remark}
\begin{gengs}Gangbo and \'Swi\c{e}ch proved uniqueness and existence of Monge solutions for the quadratic surplus \eqref{gss} on Euclidean space; a natural open question is whether this result can be generalized to Riemannian manifolds.  
Perhaps the most obvious generalization of this result would consist of uniqueness and Monge solution results for the cost function $\sum_{i \neq j}^md^2(x_i,x_j)^2$.  However, the preceding Corollary can be interpreted as a different type of generalization.  Note that 
\begin{eqnarray*}
\sup_{z \in \mathbb{R}^n}\sum_{i=1}^m-|x_i-z|^2 &=& -\sum_{i}|x_i-\frac{1}{m}\sum_j x_j|^2\\
&= & \sum_{i}(2 x_i \cdot \frac{1}{m}\sum_j x_j- |x_i|^2 -\frac{1}{m^2}|\sum_j x_j|^2)\\
&=&\sum_{i}(2 x_i \cdot \frac{1}{m}\sum_j x_j- |x_i|^2) -\frac{1}{m}|\sum_j x_j|^2\\
&=&\sum_{i}(2 x_i \cdot \frac{1}{m}\sum_j x_j- |x_i|^2) -\frac{1}{m}\sum_j\sum_k x_j\cdot x_k\\
&=&\frac{1}{m}\sum_{i} \sum_j x_i\cdot  x_j- \sum_i|x_i|^2\\
&=&- \frac{1}{2m}\sum_i\sum_j |x_i-x_j|^2
\end{eqnarray*} 
This establishes two facts:

\begin{enumerate}
\item The quadratic surplus \eqref{gss} is of the form \eqref{hp}, for $f_i(x_i,z) = -|x_i-z|^2$, which, as can easily be checked, satisfy \textbf{H1-H5}.  Corollaries \ref{maincor} and \ref{monge} therefore generalize the theorem of Gangbo and \'Swi\c{e}ch.
\item In particular, the preceding Corollary shows that our result implies a generalization of Gangbo and \'Swi\c{e}ch's theorem to Hadamard manifolds, 
\end{enumerate}
\end{gengs}
\newtheorem{genmccann}[riemmann]{Remark}
\begin{genmccann}
Brenier's celebrated polar factorization theorem amounts to uniqueness and existence of Monge solutions for the optimal transport problem on $M_1=M_2=\mathbb{R}^n$ with surplus $-|x-y|^2$ \cite{bren}.  McCann's generalization of this result resolves the same questions for the quadratic distance squared $-d^2(x_1,x_2)$ on a Riemannian manifold \cite{m3}.

Note that 
\begin{equation*}
\frac{-|x-y|^2}{2} = \sup_z(-|x-z|^2-|y-z|^2)
\end{equation*}
 and that on a Riemannian manifold 
 \begin{equation*}
 \frac{-d(x,y)^2}{2} = \sup_z(-d(x,z)^2-d(y,z)^2).
 \end{equation*}
Therefore, the preceding Corollary can be seen as a multi-marginal version of McCann's theorem (albeit only for Hadamard manifolds).  Furthermore, by the previous remark, Corollary \ref{riemmann} generalizes the Gangbo and \'Swi\c{e}ch result in the same spirit that McCann's theorem generalizes Brenier's theorem.
\end{genmccann}

To summarize the last two remarks: the surplus' of Brenier, McCann and Gangbo and \'Swi\c{e}ch are all of the form \eqref{hp} with quadratic functions $f_i$.   For Hadamard manifolds, then, Corollary \ref{riemmann} can therefore be thought of as generalizing McCann's surplus to several marginals, or the Gangbo-\'Swi\c{e}ch surplus to Riemannian manifolds.  One should note that, unlike in \cite{m3}, we deal here exclusively with a class of manifolds which are diffeomorphic to $\mathbb{R}^n$ and therefore avoid entirely complications arising from the cut locus, including non-smoothness of the $f_i$ and non-uniqueness of the minimizing $\overline{z}(x_1,x_2,...,x_m)$.  On the other hand, to this point very little is known about multi-marginal problems on \textit{any} non-flat manifolds with surplus functions derived from the distance, and so I feel Corollary \ref{riemmann} is an interesting contribution in this direction.

Finally, recall that the barycenter (with equal weights) of several points $p_1,p_2,...,p_m$ in a metric space is the minimizer of  $q \mapsto \sum_{i=1}^m d^2(p_i,q)$, assuming this minimum exists uniquely.  Recall that Wasserstein space over a Riemannian manifold $M$, the space of Borel probability measures on $M$ endowed with the Wasserstein metric

\begin{equation*}
W_{\frac{d^2(\cdot)}{2}}(\mu_1,\nu):= \sqrt{\inf \int_{M^2} \frac{d^2(x,z)}{2} d\pi}
\end{equation*}
where the infimum is over all measure $\pi$ on $M^2$ whose marginals are $\mu$ and $\nu$, is a metric space.

For the surplus function \eqref{gsman}, the optimizer $\nu$ in \eqref{MAM} in this case coincides with the barycenter of the measures $\mu_1,\mu_2,...\mu_m$.  Barycenters in Wasserstein space over $\mathbb{R}^n$ were studied by Carlier and Agueh, who proved existence, uniqueness and regularity results.  The present author proved existence and uniqueness of barycenters in Wasserstein space over Riemannian manifolds in \cite{P7}; the following result follows immediately from Theorem \ref{reg} and can be seen as a generalization of the regularity result of Agueh and Carlier.
\newtheorem{riemannbc}[riemmann]{Corollary}
\begin{riemannbc}
Let $\mu_1,\mu_2,...\mu_m$ be compactly supported Borel probability measures on a Hadamard manifold $M$ and assume $\mu_1$ is absolutely continuous with respect to local coordinates.  Then the barycenter $\nu$ is absolutely continuous with respect to local coordinates.
\end{riemannbc}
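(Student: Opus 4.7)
The plan is to reduce the statement to a direct invocation of Theorem \ref{reg} applied to the functions $f_i(x_i,z) = -d^2(x_i,z)$ treated in subsection 5.1. First I would recall, as noted in the paragraph preceding the corollary, that the Wasserstein barycenter of $\mu_1,\ldots,\mu_m$ on $M$ is by definition the minimizer of $\nu \mapsto \sum_i W_{d^2/2}^2(\mu_i,\nu)$. Up to an overall sign and constant rescaling (neither of which affects the location of optimizers), this is precisely the functional $\sum_i T_{f_i}(\nu,\mu_i)$ in \eqref{MAM} for these $f_i$. Consequently the barycenter $\nu$ coincides with the unique maximizer of \eqref{MAM} guaranteed by Theorem \ref{cesol}.

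Next I would verify hypotheses \textbf{H1}--\textbf{H5} for this choice of $f_i$ on a Hadamard manifold $M$; this is essentially the content already worked out in the proof of Corollary \ref{riemmann}. By the Cartan--Hadamard theorem, $M$ is diffeomorphic to $\mathbb{R}^n$ and has empty cut locus, so $d^2$ is smooth, giving \textbf{H1}. Uniform geodesic convexity of $z \mapsto d^2(x_i,z)$ implies that the sum $\sum_i d^2(x_i,z)$ is uniformly geodesically convex in $z$, which yields both uniqueness of the minimizing $\overline{z}$ (\textbf{H2}) and negative definiteness of $B$ in \eqref{b_xij}, hence \textbf{H3}. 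Conditions \textbf{H4} and \textbf{H5} follow from the standard fact that $z \mapsto -D_z d^2(x_i,z)$ and $x_i \mapsto -D_z d^2(x_i,z)$ coincide (up to sign) with the exponential/logarithm map on a Hadamard manifold, which is a diffeomorphism.

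Since the $\mu_i$ are compactly supported, we may choose bounded open subsets $M_i \subseteq M$ (and likewise $Z = M$ intersected with a large open ball) containing the relevant supports, and work in global coordinates provided by Cartan--Hadamard (for instance normal coordinates at a fixed point). Theorem \ref{reg} then applies directly and yields that the maximizer $\nu$ of \eqref{MAM} is absolutely continuous with respect to Lebesgue measure in these coordinates, which is exactly absolute continuity with respect to local coordinates on $M$. Combining with the identification from the first paragraph, this gives the stated conclusion about the barycenter.

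The argument is largely mechanical given the machinery already assembled; there is no substantial obstacle. The only points deserving care are the identification of maximizers of \eqref{MAM} with Wasserstein barycenters (a one-line sign/scaling observation), and the translation of the framework --- originally written for open subsets of $\mathbb{R}^n$ --- into the manifold setting, which is painless on a Hadamard manifold thanks to the global diffeomorphism with $\mathbb{R}^n$ and the absence of a cut locus.
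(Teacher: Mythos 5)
Your argument is correct and matches the paper's intended (implicit) proof: the paper simply states that the corollary "follows immediately from Theorem \ref{reg}," relying on the identification of the barycenter with the maximizer of \eqref{MAM} for $f_i = -d^2(x_i,z)$ and the verification of \textbf{H1}--\textbf{H5} already carried out in the proof of Corollary \ref{riemmann}. You have filled in exactly those details in the same way.
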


\subsection{Convex functions of the sum}
Let $b(x_1,x_2,...,x_m) = h(\sum_{i=1}^mx_i)$, where $h:\mathbb{R}^n \rightarrow \mathbb{R}$ is smooth and uniformly convex.  Letting $h^*$ denote the Legendre transform, we have that 

\begin{equation*}
h(\sum_{i=1}^mx_i) = \sup_{z \in \mathbb{R}^n}\sum_{i=1}^m x_i \cdot z -h^*(z)
\end{equation*}
Letting $f_1(x_1,z) = x_1\cdot z - h^*(z)$ and $f_i(x_i,z) = x_i\cdot z$ for $i \geq 2$, we have that $b$ is of the form $\eqref{hp}$.  It is straightforward to verify the conditions \textbf{H1-H5}, and so our main theorem generalizes the result of Heinich \cite{H}, at least under smoothness and uniform convexity conditions (the result of Heinich required only strict convexity of $h$).

\subsection{A non hedonic pricing surplus}
In the last two subsections, we have seen that our main result generalizes the results of Gangbo and \'Swi\c{e}ch and Heinich.  It is natural to wonder whether it also encompasses the result in \cite{P1}.  Below, we answer this question negatively, by exhibiting a surplus function satisfying the conditions in \cite{P1} which is not of the form \eqref{hp}.

Let $m=3$ and set 
\begin{equation}\label{bilinear}
b(x_1,x_2,x_3) = x_1 \cdot x_2 +x_1 \cdot x_3 + x_2 \cdot Ax_3
\end{equation}
where $A$ is an $n \times n$ matrix which is positive definite but \textit{not symmetric}.  It was verified in \cite{P1} (Example 4.2) that this surplus function satisfies the conditions therein.  We now show that this surplus function cannot be of the form \eqref{hp}. 

\newtheorem{nothp}{Proposition}[subsection]

\begin{nothp}
The surplus $b$ defined by \eqref{bilinear} is not of the form \eqref{hp} for any functions $f_1,f_2$ and $f_3$ satisfying \textbf{H1-H6}.
\end{nothp}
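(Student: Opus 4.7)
The plan is to derive a contradiction purely from the second-order identity \eqref{b_xij}, exploiting the hidden symmetry that any representation of the form \eqref{hp} imposes on the mixed Hessians of $b$. Assume, toward a contradiction, that there exist $f_1, f_2, f_3$ satisfying the standing hypotheses with $b(x_1,x_2,x_3) = \sup_z \sum_i f_i(x_i,z)$. Fix any $(x_1,x_2,x_3)$, set $z^\ast = \overline{z}(x_1,x_2,x_3)$, and abbreviate $P_i := D^2_{x_i z} f_i(x_i, z^\ast)$, so that $D^2_{z x_j} f_j = P_j^T$. Writing $Q := -B(x_1,x_2,x_3)^{-1}$, formula \eqref{b_xij} becomes
\begin{equation*}
D^2_{x_i x_j} b \;=\; P_i\, Q\, P_j^T \qquad (i \neq j).
\end{equation*}

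Two structural observations do all the work. First, $B = \sum_i D^2_{zz} f_i$ is a sum of Hessians of scalar functions in $z$, hence symmetric; consequently $Q$ is symmetric, and by \textbf{H3} it is invertible. By \textbf{H1}, each $P_i$ is invertible as well. Second, direct differentiation of \eqref{bilinear} yields
\begin{equation*}
D^2_{x_1 x_2}b = I, \qquad D^2_{x_1 x_3}b = I, \qquad D^2_{x_2 x_3}b = A.
\end{equation*}
Combining with the Hessian identity gives $P_1 Q P_2^T = P_1 Q P_3^T$, so left-multiplying by $(P_1 Q)^{-1}$ forces $P_2 = P_3$. Substituting into the third identity yields $A = P_2 Q P_2^T$, and since $Q$ is symmetric so is the right-hand side, contradicting the hypothesis that $A$ is \emph{not} symmetric.

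The computation is elementary; the one conceptual point is noticing that the coincidence $D^2_{x_1 x_2}b = D^2_{x_1 x_3}b$ built into \eqref{bilinear} is exactly what collapses $P_2$ onto $P_3$, after which the symmetry of $Q = -B^{-1}$ (inherited from the Hessian $B$) forces symmetry of $A$. I do not expect a substantive obstacle; the only thing requiring justification is that the identity \eqref{b_xij} is genuinely available under the assumed hypotheses, which is precisely the content of Proposition 3.3.1 of \cite{P} recalled earlier in the excerpt. Note that \textbf{H4} and \textbf{H5} are not even used in the argument, so the non-representability is a second-order, purely algebraic obstruction.
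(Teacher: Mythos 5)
Your proposal is correct and follows essentially the same route as the paper: both use the identity \eqref{b_xij} to express the mixed Hessians as $-P_i B^{-1} P_j^T$, then eliminate $P_1$ and $P_3$ to conclude that $A$ would have to equal $-P_2 B^{-1} P_2^T$, which is symmetric since $B$ is a sum of Hessians in $z$. The paper packages the elimination as a single triple product $D^2_{x_2x_3}b \cdot [D^2_{x_1x_3}b]^{-1}\cdot D^2_{x_1x_2}b$ rather than first deducing $P_2 = P_3$, but the algebra is identical; your observation that only \textbf{H1}--\textbf{H3} (the hypotheses needed to invoke \eqref{b_xij}) are actually used is a nice, accurate side remark.
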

\begin{proof}
The proof is by contradiction; assume there exist functions $f_1,f_2$ and $f_3$ satisfying \textbf{H1-H6} so that $b = \sup_{z \in Z}\sum_{i=1}^3f_i(x_i,z)$.  We compute the following product of matrices of mixed partials:

\begin{equation*}
D^2_{x_2x_3}b \cdot [D^2_{x_1x_3}b]^{-1} \cdot D^2_{x_1x_2}b
\end{equation*}
On  one hand, by \eqref{bilinear}, this is 
\begin{equation*}
D^2_{x_2x_3}b \cdot [D^2_{x_1x_3}b]^{-1} \cdot D^2_{x_1x_2}b=A.
\end{equation*}
On the other hand, using $\eqref{b_xij}$ and suppressing the arguments of the functions, we have
\begin{eqnarray*}
&&D^2_{x_2x_3}b\cdot  [D^2_{x_1x_3}b]^{-1}\cdot  D^2_{x_1x_2}b\\
&=&-D^2_{x_2z}f_2\cdot B^{-1}\cdot D^2_{zx_3}f_3\cdot [D^2_{x_1z}f_1\cdot B^{-1}\cdot D^2_{zx_3}f_3]^{-1}\cdot D^2_{x_1z}f_1\cdot B^{-1}\cdot D^2_{zx_2}f_2\\
&=&-D^2_{x_2z}f_2\cdot B^{-1}\cdot D^2_{zx_3}f_3\cdot[D^2_{zx_3}f_3]^{-1}\cdot B \cdot[D^2_{x_1z}f_1]^{-1}\cdot D^2_{x_1z}f_1\cdot B^{-1}\cdot D^2_{zx_2}f_2\\
&=&-D^2_{x_2z}f_2 \cdot B^{-1}\cdot D^2_{zx_2}f_2
\end{eqnarray*}
We therefore have 
\begin{equation}\label{cont}
A=-D^2_{x_2z} f_2\cdot B^{-1}\cdot D^2_{zx_2}f_2
\end{equation}
As $B^{-1}$ is symmetric and $D^2_{zx_2}f_2 = [D^2_{x_2z}f_2]^{T}$, $-D^2_{x_2z}f_2\cdot B^{-1}\cdot D^2_{zx_2}f_2$ is symmetric.  On the other hand $A$ is not symmetric, and so \eqref{cont} yields a contradiction.  This completes the proof.
\end{proof}

\subsection{Suprema of convex functions}
We now consider the case where each $f_i$ is a concave function of the sum.
\newtheorem{ConSum}{Corollary}[subsection]
\begin{ConSum}
Assume for all $i$ that $f_i(x_i,z) = h_i(x_i+z)$ for a smooth, uniformly concave function $h_i$ such that $\lim_{|y| \rightarrow \infty}h_i(y) =-\infty$ and that the surplus is given by \eqref{hp}. If $\mu_1$ is absolutely continuous with respect to Lebesgue measure, then the optimal measure $\gamma$ for \eqref{MK} induces a Monge solution and is unique.
\end{ConSum}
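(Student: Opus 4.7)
The plan is to reduce to Theorem \ref{main} (together with Corollaries \ref{maincor} and \ref{monge}) by verifying that the choice $f_i(x_i,z) = h_i(x_i+z)$ satisfies the hypotheses \textbf{H1}--\textbf{H5}. The key observation is that all the relevant second derivatives coincide: a direct computation gives
\begin{equation*}
D^2_{x_iz}f_i(x_i,z) = D^2_{zx_i}f_i(x_i,z) = D^2_{zz}f_i(x_i,z) = D^2 h_i(x_i+z),
\end{equation*}
so the hypotheses essentially collapse to properties of $D^2 h_i$.

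First I would verify \textbf{H1}: uniform concavity of $h_i$ means $D^2 h_i$ is (uniformly) negative definite, hence invertible, giving both non-degeneracy and $C^2$ smoothness. For \textbf{H4} and \textbf{H5}, note that $z \mapsto D_{x_1}f_1(x_1,z) = \nabla h_1(x_1+z)$ and $x_i \mapsto D_z f_i(x_i,z) = \nabla h_i(x_i+z)$ are both translates of $\nabla h_i$; strict concavity of $h_i$ (a consequence of uniform concavity) makes $\nabla h_i$ injective, which gives the twistedness conditions.

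Next I would check \textbf{H2}: the function $z \mapsto \sum_{i=1}^m h_i(x_i+z)$ is uniformly concave (being a sum of uniformly concave functions) and coercive, since each $h_i(y) \to -\infty$ as $|y| \to \infty$ implies $\sum_i h_i(x_i+z) \to -\infty$ as $|z| \to \infty$. Hence the supremum is attained at a unique point $\overline{z}(x_1,\dots,x_m)$; implicit in the hypothesis that the surplus is given by \eqref{hp} is that this maximizer lies in $Z$ (or one enlarges $Z$ to $\mathbb{R}^n$ if needed). Finally \textbf{H3} follows because $B(x_1,\dots,x_m) = \sum_{i=1}^m D^2 h_i(x_i + \overline{z})$ is a sum of negative definite matrices and therefore negative definite, in particular non-singular.

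With \textbf{H1}--\textbf{H5} in hand, Theorem \ref{main} yields the representation $\gamma = (F_1,\dots,F_m)_\# \nu$, Corollary \ref{maincor} gives uniqueness, and Corollary \ref{monge} produces the Monge maps $G_i = F_i \circ F_1^{-1}$, proving the claim. There is no real obstacle here beyond routine verification; the only mild subtlety worth flagging is ensuring $\overline{z}(x_1,\dots,x_m)$ lies in the open set $Z$, which is handled either by the coercivity assumption (allowing $Z = \mathbb{R}^n$) or by taking $Z$ large enough to contain the compact range of $\overline{z}$ over $M_1 \times \cdots \times M_m$.
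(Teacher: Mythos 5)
Your proposal is correct and follows essentially the same route as the paper: the paper's proof also simply verifies that \textbf{H1--H5} hold (attributing \textbf{H1}, \textbf{H3}, \textbf{H4}, \textbf{H5} to smoothness and uniform concavity of $h_i$ and \textbf{H2} to the growth condition) and then invokes Theorem \ref{main} together with Corollaries \ref{maincor} and \ref{monge}. You have merely filled in the routine computations (the identity $D^2_{x_iz}f_i = D^2_{zx_i}f_i = D^2_{zz}f_i = D^2h_i(x_i+z)$, injectivity of $\nabla h_i$, negative definiteness of $B$) that the paper leaves implicit.
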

\begin{proof}
We need only to verify conditions \textbf{H1-H5}, in order to apply Corollaries \ref{maincor} and \ref{monge}.  \textbf{H1, H3, H4} and \textbf{H5} follow from the smoothness and uniform convexity of $h_i$, while the growth condition ensures the attainment of a maximum (condition \textbf{H2}).  Therefore, our main theorem applies.
\end{proof}

Recall that in \cite{P1}, a strong second order condition on the surplus, called condition (III) in Theorem 3.1 there, is required to prove Monge solution and uniqueness results.  Next, we show that, for a surplus of the form \eqref{hp}, for well chosen $f_i(x_i,z)=h_i(x_i+z)$ (satisfying the hypotheses in the preceding Corollary), the condition (III) \textit{fails}, and so our result here is not strictly weaker than the theorem in \cite{P1}.

For simplicity, choose $m=3$ and take each $M_i$ to be a large open ball, centered at the origin.   In \cite{P1}, condition (III) was calculated explicitly for surpluses of form \eqref{hp} in Proposition 4.2.2;  amounts to the following:  

\emph{	For all choices $x_1,\tilde{x_1} \in M_1$, $x_2\in M_2$ and $x_3,\tilde{x_3} \in M_3$, the $n\times n$ matrix
\begin{eqnarray*}&&T_{x_1,x_2,x_3,\tilde{x_1},\tilde{x_3}}:=\\
&&-\Big[(D_{x_2z}f_2)B^{-1}(D^2_{zx_2}f_2)\Big]\Big(x_2,\overline{z}(x_1,x_2,x_3)\Big)+D^2_{x_2x_2}f_2\Big(x_2,\overline{z}(x_1,x_2,x_3)\Big)-D^2_{x_2x_2}f_2\Big(x_2,\overline{z}(\tilde{x_1},x_2,\tilde{x_3})\Big)
\end{eqnarray*}
is positive definite.}
 
We now show this condition fails for the choice $f_i(x_i,z) = -\sqrt{[1+|x_i +z|^2]}, i=1,2,3$.  This form of preference function was introduced in the optimal transport literature by Brenier \cite{BrenEMK}.  It also served as an early example of a cost function satisfying (A3w), the crucial condition introduced by Ma, Trudinger and Wang governing the regularity of optimal maps in two marginal problems \cite{mtw}.

It is easy to check that this function satisfies the conditions in the preceding Corollary, however, and so our surplus \eqref{hp} with these $f_i$ is an example of a surplus to which our main result here applies but the result in \cite{P1} does not. 
\newtheorem{fails}[ConSum]{Proposition}
\begin{fails}
 Condition (III) above fails for $b$ of the form \eqref{hp}, when $m=3$ and each $f_i(x_i,z) = -\sqrt{[1+|x_i +z|^2]}$.
\end{fails}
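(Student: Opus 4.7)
The plan is to exhibit an explicit configuration of points $(x_1, x_2, x_3)$ and $(\tilde x_1, \tilde x_3)$ for which $v^T T v < 0$ for some vector $v$. The driving observation is that for these $f_i$, the matrix $D^2_{x_2 x_2} f_2(x_2, z)$ decays to zero as $|x_2 + z| \to \infty$; if we can choose $(\tilde x_1, \tilde x_3)$ so that $|\overline z(\tilde x_1, x_2, \tilde x_3)|$ is large, the subtracted third term in $T$ becomes negligible, and we only need to make the remaining two terms negative at the first triple.

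First I would compute the second derivatives. With $y = x_i + z$, a direct calculation gives
\[
D^2_{x_i x_i} f_i = D^2_{x_i z} f_i = D^2_{z z} f_i = -\frac{I}{\sqrt{1+|y|^2}} + \frac{y y^T}{(1+|y|^2)^{3/2}},
\]
and for any unit vector $e$ parallel to $y$, one finds $e^T D^2_{x_i x_i} f_i \, e = -(1+|y|^2)^{-3/2}$, which tends to $0$ as $|y| \to \infty$.

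Next I would exploit radial symmetry to reduce the verification to one dimension. Fix a unit vector $e$ and set $(x_1, x_2, x_3) = (0,0,0)$; by symmetry $\overline z(0,0,0) = 0$, and at this point $B = -3I$, $D^2_{x_2 z} f_2 = -I$, and $D^2_{x_2 x_2} f_2 = -I$, so the first two terms of $e^T T e$ contribute $\tfrac{1}{3} - 1 = -\tfrac{2}{3}$. Then take $\tilde x_1 = \tilde x_3 = L e$ for large $L > 0$; symmetry again gives $\overline z(Le, 0, Le) = \tilde z(L) e$, where $\tilde z(L)$ solves
\[
\frac{2(L+\tilde z)}{\sqrt{1+(L+\tilde z)^2}} + \frac{\tilde z}{\sqrt{1+\tilde z^2}} = 0.
\]
Setting $a = L + \tilde z$ and observing that the second summand tends to $-1$ as $L \to \infty$ forces $2a/\sqrt{1+a^2} \to 1$, hence $a \to 1/\sqrt{3}$; in particular $\tilde z(L) = -L + O(1)$ and $|\tilde z(L)| \to \infty$.

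Combining with the decay from the first step, $e^T D^2_{x_2 x_2} f_2(0, \tilde z(L) e)\, e = -(1+\tilde z(L)^2)^{-3/2} \to 0$, and therefore
\[
e^T T_{x_1, x_2, x_3, \tilde x_1, \tilde x_3}\, e \longrightarrow -\tfrac{2}{3} < 0 \quad \text{as } L \to \infty.
\]
Thus $T$ fails to be positive definite for all sufficiently large $L$, contradicting condition (III). The only step requiring genuine care is the asymptotic solution of the scalar first-order equation for $\tilde z(L)$; everything else is routine substitution into the explicit derivative formulas.
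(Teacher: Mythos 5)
Your argument is correct, and it is a genuinely different route from the paper's. The paper fixes a concrete configuration --- it takes $x_1=x_2=x_3=0$, $\tilde x_1=\tilde x_3 = p$ with $|p| = 5/2$, solves the first-order condition to find $\overline z(p,0,p) = -\tfrac{4}{5}p$ (so $|\overline z| = 2$), and then bounds $T \le \bigl[\tfrac{1}{\sqrt 5} - \tfrac{2}{3}\bigr] I$ by explicit matrix algebra. You instead isolate the mechanism driving the failure: the Hessian $D^2_{x_2x_2} f_2$ decays to zero as $|x_2 + z| \to \infty$, so by pushing $\tilde x_1 = \tilde x_3 = Le$ off to infinity and showing (via the scalar first-order condition, $a = L + \tilde z \to 1/\sqrt 3$, hence $\tilde z(L) \to -\infty$) that the argument of the subtracted term also escapes to infinity, you kill that term entirely and are left with $e^T T e \to -2/3$. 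Both are valid; the paper's choice has the advantage of being fully explicit and self-contained at a single parameter value, while yours is conceptually cleaner and makes visible \emph{why} condition (III) must fail for this surplus. One small point worth making explicit in your version: the bound $|2a/\sqrt{1+a^2}| < 1$ forced by the scalar equation is what keeps $a$ bounded and guarantees $\tilde z \to -\infty$; you state the conclusion but it is worth flagging that step, since the rest hinges on it. Also, since the paper works on bounded domains $M_i$, an asymptotic argument needs the $M_i$ to be chosen large enough to contain the eventual $L$; this is implicit in the paper's phrase ``take each $M_i$ to be a large open ball'' and is equally implicit in your setup, so it is not a gap, just something to acknowledge.
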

\begin{proof}
Take
\begin{eqnarray*}
x_1,x_2,x_3=0 && \tilde{x_3} =\tilde{x_1} = p,
\end{eqnarray*}
for some $p \in \mathbb{R}^n$ with $|p|=\frac{5}{2}$.  As $D_{z}f_i(x_i,z)=Dh_i(x_1+z)=-\frac{x_i+z}{[1+|x_i+z|^2]^{\frac{1}{2}}}$, we then have 

\begin{eqnarray*}
0&=&\sum_{i=1}^3D_zf_i(x_i,\overline{z}(x_1,x_2,x_3))\\
&=&\sum_{i=1}^3Dh(0+\overline{z}(0,0,0))\\
&=&-\frac{3\overline{z}(0,0,0)}{[1+|\overline{z}(0,0,0)|^2]^{\frac{1}{2}}}
\end{eqnarray*}
which implies 
\begin{equation*}
\overline{z}(x_1,x_2,x_3)=\overline{z}(0,0,0)=0.  
\end{equation*}

In a similar manner, we have
\begin{eqnarray*}
0&=&D_zf_1(\tilde{x_1},\overline{z}(\tilde{x_1},x_2,\tilde{x_3})) +D_zf_2(x_2,\overline{z}(\tilde{x_1},x_2,\tilde{x_3}))+D_zf_3(\tilde{x_3},\overline{z}(\tilde{x_1},x_2,\tilde{x_3})) \\
&=&Dh(p+\overline{z}(p,0,p))+Dh(0+\overline{z}(p,0,p))+Dh(p+\overline{z}(p,0,p))\\
&=&-2\frac{p+\overline{z}(p,0,p)}{[1+|p+\overline{z}(p,0,p)|^2]^{\frac{1}{2}}} -\frac{\overline{z}(p,0,p)}{[1+|\overline{z}(p,0,p)|^2]^{\frac{1}{2}}}
\end{eqnarray*}

One can easily verify (by, for example, direct substitution) that this yields 

\begin{equation*}
\overline{z}(p,0,p) =-\frac{4}{5}p.
\end{equation*}

In particular, as $|p|=\frac{5}{2}$, $|\overline{z}(p,0,p)|=2$.
Now note that 
\begin{equation*}
D^2h(y) = \frac{1}{[1+|y|^2]^{\frac{1}{2}}}(\frac{y\cdot y^T}{1+|y|^2} -I).
\end{equation*}
It is then straightforward to compute 
\begin{eqnarray*}
B(x_1,x_2,x_3) &= &B(0,0,0) =3D^h(0+0)=-3I\\
D^2_{x_2z}f_2(x_2,\overline{z}(x_1,x_2,x_3)&=&D^2h(0+0) =-I
\end{eqnarray*}
Therefore
\begin{eqnarray*}
\Big[(D^2_{x_2z}f_2)B^{-1}(D^2_{zx_2}f_2)\Big]\Big(x_2,\overline{z}(x_1,x_2,x_3)\Big) &=&\Big[(D^2_{x_2z}f_2)B^{-1}(D^2_{zx_2}f_2)\Big]\Big(0,0\Big)\\
&=&-\frac{1}{3}I
\end{eqnarray*}
Similarly, we have

\begin{eqnarray*}
D^2_{x_2x_2}f_2\Big(x_2,\overline{z}(x_1,x_2,x_3)\Big) &=&-I,\\
D^2_{x_2x_2}f_2\Big(x_2,\overline{z}(\tilde{x_1},x_2,\tilde{x_3})\Big)&=& \frac{1}{[1+|\overline{z}(p,0,p)|^2]^{\frac{1}{2}}}\Big(\frac{\overline{z}(p,0,p)\cdot\overline{z}(p,0,p)^T}{1+|\overline{z}(p,0,p)|^2} -I\Big)
\end{eqnarray*}
 We therefore have:
 
 \begin{eqnarray*}
 T_{x_1,x_2,x_3,\tilde{x_1},\tilde{x_3}}&=&\frac{1}{3}I -I-\frac{1}{[1+|\overline{z}(p,0,p)|^2]^{\frac{1}{2}}}(\frac{\overline{z}(p,0,p)\cdot\overline{z}(p,0,p)^T}{1+|\overline{z}(p,0,p)|^2} -I)\\
 &\leq&-\frac{2}{3}I+\frac{1}{[1+|\overline{z}(p,0,p)|^2]^{\frac{1}{2}}}I\\
 & =&[\frac{1}{[1+|\overline{z}(p,0,p)|^2]^{\frac{1}{2}}} -\frac{2}{3}]I\\
 &=&[\frac{1}{\sqrt{5}} -\frac{2}{3}]I
 \end{eqnarray*}
As $\frac{1}{\sqrt{5}} < \frac{2}{3}$, this is negative definite; in particular, it is not positive definite, so condition (III) fails.
\end{proof}
\bibliographystyle{plain}
\bibliography{biblio}
\end{document}